\newcommand{\ID}{{\mathbb D}}
\newtheorem{theorem}{Theorem}[section]
\newtheorem{lemma}[theorem]{Lemma}
\newtheorem{corollary}[theorem]{Corollary}
\newtheorem{remark}[theorem]{Remark}
\newtheorem{prob}{Problem}
\newcommand{\bprob}{\begin{prob}}
\newcommand{\eprob}{\end{prob}}
\newcommand{\IN}{{\mathbb N}}
\def\be{\begin{equation}}
\def\ee{\end{equation}}
\begin{document}

\bibliographystyle{amsplain}

\title[Polyanalytic and Log-$\alpha$-analytic functions]
{Landau-type Theorems for Polyanalytic and Log-$\alpha$-analytic functions}

\author{Ping Li}
	\address{P. Li, School of Mathematical Sciences, South China Normal University, Guangzhou, Guangdong 510631, China.}
	\email{lp0150321@163.com}
	
\author{Ming-Sheng Liu${}^{~\mathbf{*}}$
	}
\address{M.S. Liu, School of Mathematical Sciences, South China Normal University, Guangzhou, Guangdong 510631, China.} \email{liumsh65@163.com}

\author{Saminathan Ponnusamy
}
\address{S. Ponnusamy, Department of Mathematics,
Indian Institute of Technology Madras, Chennai-600 036, India. }


\email{samy@iitm.ac.in}

\author{Hanghang Zhao}
\address{H. Zhao, School of Mathematical Sciences, South China Normal University, Guangzhou, Guangdong 510631, China.}
\email{zhh18888106641@163.com}

%
%
%
%

\date{\today}

\subjclass[2020]{Primary 30C99; Secondary: 31A30}
\keywords{Polyanalytic functions, Log-$\alpha$-analytic functions, Coefficient estimates, Landau-type theorems\\
{\tt $^*$Corresponding author.}
}


\begin{abstract}
In the present article, we investigate  the univalence property of polyanalytic functions and $\log$-$\alpha$-analytic functions. First,   by using a new idea, we prove an improved lemma and the coefficient estimates for bounded polyanalytic functions on the unit disk. Then, we present three versions of Landau-type theorems for such functions and determine the univalence domain and the radius of schlicht disk. Finally, as a consequence, the Landau-type theorems for $\log$-$\alpha$-analytic functions are also provided.
\end{abstract}

\maketitle


\section{Introduction and Preliminaries}\label{HLP-sec1} 

Landau-Bloch type theorems for different families of (holomorphic, harmonic, biharmonic, pluriharmonic) functions are obtained by a number of researchers (cf. \cite{CRW2014, CPR2014, CZ2019,G2006,LP-2022,LXY2017} and the references therein).
On most cases, the inequalites/results obtained in this direction are not guaranteed to be sharp. Therefore, one requires new methods to establish non-trivial and improved estimates if not sharp. It it worth recalling that the theory of polyanalytic functions is an interesting topic in the sense that it naturally extends the concept of holomorphic functions to null-solutions of higher-order powers of the Cauchy-Riemann operator. These functions were introduced in 1908 by Kolosso \cite{Kgv2006} to study elasticity problems. For a complete introduction to nonanalytic functions and their basic properties we refer to \cite{Bm1991,Bm1997}. The class of nonanalytic functions have been studied by various authors from different perspectives; see\cite{Ald2010,Aml2011} and the references therein.

In this paper we consider non-analytic functions which do have common properties similar to those of the case of analytic functions, and they have a notable structure and applications \cite{Ald2010,Bm1991,Bm1997}. For example, Landau-type theorems play pivotal roles in the analytic function theory, which provide the core theoretical basis for the study of univalence and image domain properties of different classes of functions
(cf. \cite{CGH2000,LP-2022} and the references therein). With the development of polyanalytic functions, it becomes a natural and important research direction to extend, for example, the Landau-type theorem to the case of polyanalytic functions.

\subsection{Definitions and Notation} 
 A continuous complex-valued function~$F$ defined in a domain $\Omega\subseteq \mathbb{C}$ is polyanalytic of order $\alpha\in \IN :=\{1,2, \ldots\}$ if it satisfies the generalized Cauchy-Riemann equations
 $$
\frac{\partial^\alpha}{\partial \bar{z}^\alpha} F(z)=0, \text { for all } z \in \Omega .
$$
Every polyanalytic function $F$ of order $\alpha$ can be decomposed in terms of  $\alpha$ analytic functions of the form
$$
F(z)=\sum_{k=0}^{\alpha-1} \bar{z}^k A_k(z),
$$
where each $A_k$ is analytic for $k \in\{0,1, \ldots, \alpha-1\}$.
Now, we recall some standard notation: Let $\ID$ denote the open unit disk in the complex plane $\mathbb{C}$.
For $r>0$, we let $\mathbb{D}_{r}=\{z\in\mathbb{C}:\,|z|<r\}$ so that  $\mathbb{D}:=\mathbb{D}_{1}$. For a continuously differentiable complex-valued mapping $F=u+iv$, that is to say that $F$ is a $C^1$-function, on a domain $\Omega$, the formal partial derivatives are defined by
$$
F_{z}=\frac{1}{2}(F_{x}-iF_{y})~\mbox{ and }~ F_{\overline{z}}=\frac{1}{2}(F_{x}+iF_{y}),
$$
where $z=x+iy \in\Omega$. Also, the maximal and minimal stretching of a $C^1$ mapping $F$ 
are defined respectively by
\begin{eqnarray*}
\Lambda_F (z) &=& \max_{0\leq t\leq2\pi}|F_z(z)+e^{-2it}F_{\overline{z}}(z)|
=|F_z(z)|+|F_{\overline{z}}(z)|,
\end{eqnarray*}

and
\begin{eqnarray*}
\lambda_F (z)&=& \min_{0\leq t\leq2\pi}|F_z(z)+e^{-2it}F_{\overline{z}}(z)|=\big |\,|F_z(z)|-|F_{\overline{z}}(z)| \big |.
\label{1-2}
\end{eqnarray*}
Note that the Jacobian of $F$ is given by $J_F=|F_z|^2-|F_{\overline{z}}|^2$ (see \cite{CGH2000}).

A function $f$ is called a $\log$-$\alpha$-analytic in a domain $D \subset \mathbb{C}$ if $\log f$ is a polyanalytic function of order $\alpha$ in $D$. Here and in what follows, ``$\log$" is taken to be its principal branch such that $\log 1=0$. When $\alpha=1$, the function $f$ is called $\log$-analytic. Then it follows from (1.1) that $f$ is $\log$-$\alpha$-analytic in a simply connected domain $D$ if and only if $f$ can be written as $$
f(z)=\prod_{k=0}^{\alpha-1}\left(a_k(z)\right)^{\bar{z}^k},
$$
where each $a_k(z)$ is $\log$-analytic in $D$ for $k \in\{0,1, \ldots, \alpha-1\}$ (see \cite{ZLK2025}).




\subsection{Landau-type theorems}

Let
\begin{eqnarray*}
	\mathcal{A}&=&\left\{f: f \text{ is analytic in } \mathbb{D}, f(0)=0 \text{ and }f^\prime(0)=1\right\},~\mbox{ and }\\
 	\mathcal{A}(M)&=&\left\{f\in \mathcal{A}: |f(z)|\leq M \text{ in }\mathbb{D}\right\}.
\end{eqnarray*}
A well-known classical theorem of Landau  asserts that if $f\in \mathcal{A}(M)$ for some $M>1$, then $f$ is univalent in $\mathbb{D}_{r_0}$ and $f\left(\mathbb{D}_{r_0}\right)$ contains a disc $\mathbb{D}_{\sigma_0}$, where $r_0=M-\sqrt{M^2-1}$ and $\sigma_0=M r_0^2$. The quantities $r_0$ and $\sigma_0$ are best possible as the function $f_0(z)=M z\left(\frac{1-M z}{M-z}\right)$ shows (see \cite{L1926}).
The Bloch theorem asserts the existence of a positive constant $b$ such that if $f$ is an analytic function on $\mathbb{D}$ with $f'(0) = 1$, then $f(\mathbb{D})$ contains a schlicht disk of radius $b$, that is, a disk of radius $b$ which is the univalent image of some region in $\mathbb{D}$. The supremum of all such constants $b$ is called the Bloch constant (see \cite{{CGH2000}}).

In 2000, Chen et al. \cite{CGH2000} obtained two non-sharp versions of the Landau theorem for bounded harmonic mappings of the unit disk. Then after many authors investigated the Landau-type theorems for harmonic mappings and improved their results. See for example, Chen et al. \cite{CGH2000,CPR2014}, Grigoryan \cite{G2006} and Liu \cite{LC2018,LLL2020}. Using the work of Abdulhadi and Abu Muhanna \cite{AM2008}, several authors
investigated the Landau-Bloch type theorems for polyharmonic mappings with a different normalization (see  \cite{BL2019,CRW2014,LXY2017}).

In 2022, the Landau-type theorem for polyanalytic functions was first studied by Abdulhadi and Hajj \cite{AH2022}.

\begin{theorem}{\rm \cite{AH2022}}
Let $F(z)=\sum_{k=0}^{\alpha-1} \bar{z}^k A_k(z)$ be a polyanalytic function of order $\alpha$ on $\mathbb{D}$ with $\alpha \geq 2$, where $A_k\in \mathcal{A}(M)$ for each $k\in \{0,\ldots , \alpha-1\}$, and $M>1$. Then there is a constant $0<\rho_1<1$ so that $F$ is univalent in $|z|<\rho_1$. In particular $\rho_1$ satisfies $$
1-M\left(\frac{\rho_1\left(2-\rho_1\right)}{\left(1-\rho_1\right)^2}+\sum_{k=1}^{\alpha-1} \frac{\rho_1^k\left(1+k-k \rho_1\right)}{\left(1-k \rho_1\right)^2}\right)=0,
$$
and $F\left(\mathbb{D}_{\rho_1}\right)$ contains the disk $\mathbb{D}_{R_1}$, where
$$
R_1=\rho_1-\rho_1^2\left(\frac{1-\rho_1^{\alpha-1}}{1-\rho_1}\right)-M \sum_{k=0}^{\alpha-1} \frac{\rho_1^{k+2}}{1-\rho_1}.
$$
\end{theorem}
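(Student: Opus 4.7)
Each $A_k \in \mathcal{A}(M)$ admits the Taylor expansion $A_k(z) = z + \sum_{n \ge 2} a_{k,n} z^n$, and $|A_k| \le M$ on $\mathbb{D}$ forces $|a_{k,n}| \le M$ by Cauchy's estimate. This single coefficient bound is the only input I will use from the hypothesis.

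\textbf{Univalence on $\mathbb{D}_{\rho_1}$.} Fix distinct $z_1, z_2$ with $|z_j| \le \rho < 1$ and split
\begin{equation*}
F(z_1) - F(z_2) = [A_0(z_1) - A_0(z_2)] + \sum_{k=1}^{\alpha-1}[\bar z_1^k A_k(z_1) - \bar z_2^k A_k(z_2)].
\end{equation*}
I would extract the identity piece of $A_0$ via $A_0(z_1) - A_0(z_2) = (z_1 - z_2) + \sum_{n \ge 2} a_{0,n}(z_1^n - z_2^n)$ and invoke $|z_1^n - z_2^n| \le n\rho^{n-1}|z_1 - z_2|$ together with $|a_{0,n}| \le M$ to get
\begin{equation*}
|A_0(z_1) - A_0(z_2)| \ge |z_1 - z_2|\Big(1 - M\tfrac{\rho(2-\rho)}{(1-\rho)^2}\Big).
\end{equation*}
For each $k \ge 1$ the goal is the companion upper bound
\begin{equation*}
|\bar z_1^k A_k(z_1) - \bar z_2^k A_k(z_2)| \le M|z_1 - z_2|\,\tfrac{\rho^k(1+k-k\rho)}{(1-k\rho)^2}.
\end{equation*}
Assembling everything yields $|F(z_1) - F(z_2)| \ge |z_1 - z_2|\,\Phi(\rho)$, where $\Phi$ is the bracketed expression in the defining equation for $\rho_1$. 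Since $\Phi(0) = 1$ and $\Phi$ blows down to $-\infty$ as $\rho \uparrow 1/(\alpha-1)$, its smallest positive root is exactly $\rho_1$, and univalence on $\mathbb{D}_{\rho_1}$ follows.

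\textbf{Coverage of $\mathbb{D}_{R_1}$.} For the covering claim I bound $|F(z)|$ from below on the circle $|z| = \rho_1$ by
\begin{equation*}
|F(z)| \ge |A_0(z)| - \sum_{k=1}^{\alpha-1} |z|^k |A_k(z)|,
\end{equation*}
together with the direct consequences $|A_0(z)| \ge |z| - \frac{M|z|^2}{1-|z|}$ and $|A_k(z)| \le |z| + \frac{M|z|^2}{1-|z|}$ of the bound $|a_{k,n}| \le M$. Collecting the geometric sums at $|z| = \rho_1$ reproduces exactly the stated $R_1$, and since $F$ is univalent on $\mathbb{D}_{\rho_1}$ with $F(0) = 0$, the standard boundary-distance argument forces $F(\mathbb{D}_{\rho_1}) \supseteq \mathbb{D}_{R_1}$.

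\textbf{Main obstacle.} The delicate step is the $k$-th intermediate inequality with the precise $(1-k\rho)^{-2}$ denominator. A naive line integral of the Wirtinger derivatives $\partial_z(\bar z^k A_k) = \bar z^k A_k'(z)$ and $\partial_{\bar z}(\bar z^k A_k) = k\bar z^{k-1} A_k(z)$ already produces an expression of the right shape, but extracting the specific form $\rho^k(1+k-k\rho)/(1-k\rho)^2$ requires combining a Schwarz--Pick estimate for $|A_k'|$ with a rearrangement of the resulting sum as a geometric series in $k\rho$ rather than in $\rho$. Once this estimate is in place, both monotonicity of $\Phi$ (hence existence and uniqueness of the smallest root $\rho_1$) and the algebraic simplification that produces $R_1$ are routine.
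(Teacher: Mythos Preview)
The paper does not prove this statement; it is quoted verbatim from \cite{AH2022} as background. The closest in-paper analogue is the proof of Theorem~\ref{thm3}, which uses exactly the line-integral/Wirtinger-derivative decomposition you outline, so your overall strategy is the standard one and is correct.

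Your ``main obstacle'' is a phantom. The denominator $(1-k\rho_1)^2$ in the quoted statement is almost certainly a typographical error for $(1-\rho_1)^2$; there is no mechanism in the problem that would produce a geometric series in $k\rho$, and no Schwarz--Pick refinement is needed. The $k$-th bound drops out of the very computation you call ``naive'': for $|z|\le\rho$,
\[
\Lambda_{\bar z^k A_k}(z)=|z|^k|A_k'(z)|+k|z|^{k-1}|A_k(z)|
\le \rho^k\cdot\frac{M}{(1-\rho)^2}+k\rho^{k-1}\cdot\frac{M\rho}{1-\rho}
= M\,\frac{\rho^k(1+k-k\rho)}{(1-\rho)^2},
\]
using only $|a_{k,n}|\le M$ (including $|a_{k,1}|=1\le M$) to get $|A_k'(z)|\le M/(1-\rho)^2$ and $|A_k(z)|\le M\rho/(1-\rho)$. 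This is precisely how the paper handles the analogous step in Theorem~\ref{thm3} (with the sharper constant $\sqrt{M^2-\alpha}$ in place of $M$). Your covering argument for $R_1$ is correct as written.
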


In \cite{CRW2014}, under a suitable restriction, Chen et al. established the following Landau-type theorem for bounded polyharmonic mappings.

\begin{theorem}\label{Theo-A} {\rm (\cite[Theorems 1 and 2]{CRW2014})} Suppose that $F$ is a polyharmonic mapping of the form:
$$
F(z)=a_0+\sum^p_{k=1}|z|^{2(k-1)}\sum^{\infty}_{n=1}\left(a_{n,k}z^n+\overline{b_{n,k}z^n}\right),
$$
and all its non-zero coefficients $a_{n,k_1}, a_{n,k_2}$ and $b_{n,k_3}, b_{n,k_4}$ satisfy the condition:
$$
\left|\arg\frac{a_{n,k_1}}{a_{n,k_2}}\right|\le \frac{\pi}{2} \quad and \quad\left|\arg\frac{b_{n,k_3}}{b_{n,k_4}}\right|\le \frac{\pi}{2}.
$$

If $|F(z)| \leq M$ in $\mathbb{D}$ for some $M>1$ and $F(0)=0=J_F(0)-1 $, then 
$F$ is univalent in the disk $\mathbb{D}_{\rho_2}$ and $F(\mathbb{D}_{\rho_2})$ contains a schlicht disk $\mathbb{D}_{R_2}$,
where $\rho_2=\rho_2(M,p)$ is the least positive root of the following equation:
$$
1-\sqrt{M^4-1}\left(\frac{2r-r^2}{(1-r)^2}+\sum^{p-1}_{k=1}\frac{r^{2k}}{(1-r)^2}
+\sum^{p-1}_{k=1}\frac{2kr^{2k}}{1-r}\right)=0,
$$
and
$$
R_2=\lambda_0'(M)\rho_2\left(1-\sqrt{M^4-1}\frac{\rho_2}{1-\rho_2}-\sqrt{M^4-1}\sum^{p-1}_{k=1}\frac{2\rho_2^{2k}}{1-\rho_2}\right),
$$
with
\begin{eqnarray*}
\lambda_0'(M)=\left\{
       \begin{array}{ll}
         \frac{\sqrt{2}}{\sqrt{M^{2}-1}+\sqrt{M^{2}+1}}, & {1\leq M\leq M_{0}=\frac{\pi}{2\sqrt[4]{2\pi^2-16}}\approx1.1296,} \\
         \frac{\pi}{4M}, & {M>M_{0}}.
       \end{array}
     \right.
\end{eqnarray*}

In particular, if $|F(z)| \leq 1$ for all $z\in\mathbb{D}$ and $F(0)=0=J_F(0)-1 $, then $F$ is a univalent mapping of $\mathbb{D}$ onto $\mathbb{D}$.
\end{theorem}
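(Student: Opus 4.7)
The plan is to follow the Chen--Gauthier--Hengartner template for Landau-type results adapted to polyharmonic mappings: establish sharp coefficient bounds, derive a lower bound on $\lambda_F(0)$, dominate the nonlinear remainder of $F$ by geometric series, and combine these to produce the univalence radius and schlicht disk radius. The angular hypothesis $|\arg(a_{n,k_1}/a_{n,k_2})|\le \pi/2$ (and its $b$-analogue) is the key device: it prevents cancellation when one passes from Fourier coefficients of $F$ on circles back to the individual coefficients $a_{n,k}$ and $b_{n,k}$, and it is the reason only a single factor of $\sqrt{M^4-1}$ (rather than a $p$-dependent one) appears in the defining equation for $\rho_2$.

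I would begin with the coefficient estimates. The Fourier decomposition of $F(re^{i\theta})$ groups $\sum_{k} a_{n,k}\,r^{2(k-1)}$ as the coefficient of $r^n e^{in\theta}$. Applying a bounded Schwarz--Pick-type estimate to these Fourier coefficients under $|F|\le M$ bounds the signed sums; the angular hypothesis then converts those bounds into estimates on $\sum_{k}|a_{n,k}|\,r^{2(k-1)}$ and the analogue for $b_{n,k}$, with the characteristic factor $\sqrt{M^4-1}$. Next, $F(0)=0$ forces $a_0=0$, while $J_F(0)=1$ translates to $|a_{1,1}|^2-|b_{1,1}|^2=1$. Combining this identity with $|F|\le M$ and optimizing $\lambda_F(0)=|a_{1,1}|-|b_{1,1}|$ subject to the constraints produces the piecewise $\lambda_0'(M)$; the breakpoint $M_0=\pi/(2\sqrt[4]{2\pi^2-16})$ is exactly where the Schwarz-type estimate $\sqrt{2}/(\sqrt{M^2-1}+\sqrt{M^2+1})$ coincides with the Heinz-type harmonic Schwarz bound $\pi/(4M)$.

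For the univalence claim on $\mathbb{D}_{\rho_2}$, given distinct $z_1,z_2$ I would write
$$
F(z_2)-F(z_1)=\int_{[z_1,z_2]}\!\bigl(F_z(z)\,dz+F_{\bar z}(z)\,d\bar z\bigr),
$$
split each integrand as its value at $z=0$ plus a remainder, and estimate below by the linear contribution $|a_{1,1}(z_2-z_1)+\overline{b_{1,1}}(\bar z_2-\bar z_1)|$ minus the remainder integral. Term-by-term differentiation of the series for $F$, combined with the coefficient bounds from Step~1, shows after geometric summation in $n$ and $k$ that the remainder is at most
$$
\sqrt{M^4-1}\Bigl(\tfrac{2r-r^2}{(1-r)^2}+\sum_{k=1}^{p-1}\tfrac{r^{2k}}{(1-r)^2}+\sum_{k=1}^{p-1}\tfrac{2k\,r^{2k}}{1-r}\Bigr)|z_2-z_1|
$$
on $|z|\le r$, giving the defining equation for $\rho_2$ as the least root where the linear and remainder parts balance. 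The schlicht disk radius $R_2$ comes from the companion estimate $|F(z)|\ge \lambda_0'(M)\,|z|-(\text{remainder bound})$ on $|z|=\rho_2$, using the sharper lower bound $\lambda_0'(M)$ for the linear part. The degenerate case $M=1$ forces all higher coefficients to vanish and $F$ to reduce to a rotation of $\mathbb{D}$.

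The main obstacle is the sharp piecewise lower bound $\lambda_0'(M)$: its derivation requires a delicate optimization that combines the algebraic Schwarz--Pick constraint with the transcendental Heinz harmonic Schwarz inequality, and the transition at $M_0$ is exactly the point where the two competing extremal configurations change. Once $\lambda_0'(M)$ and the coefficient estimates are in hand, Steps~3 and 4 reduce to careful but routine geometric-series bookkeeping, with the angular hypothesis providing the only nontrivial ingredient in converting Fourier-coefficient bounds into bounds on individual $a_{n,k}$ and $b_{n,k}$.
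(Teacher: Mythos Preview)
This theorem is not proved in the present paper: it is quoted verbatim from Chen--Rasila--Wang \cite{CRW2014} as background (Theorem~A in the introduction), and no proof or sketch is supplied here. There is therefore nothing in this paper to compare your proposal against; the authors invoke it only as motivation for their own Landau-type results on polyanalytic functions (Theorems~\ref{thm3}--\ref{thm5}).

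That said, your outline is broadly the right template, but one detail does not line up with the stated result. In your Step~3 you bound the linear contribution below by $\lambda_F(0)|z_2-z_1|$ and then by $\lambda_0'(M)|z_2-z_1|$; balancing this against the remainder would produce an equation of the form $\lambda_0'(M)-\sqrt{M^4-1}(\cdots)=0$ for the univalence radius. But the theorem's defining equation for $\rho_2$ begins with the constant $1$, not $\lambda_0'(M)$, while $\lambda_0'(M)$ enters only in the formula for $R_2$. This indicates that in \cite{CRW2014} the coefficient bounds themselves carry a factor of $\lambda_F(0)$ (so that, e.g., $|a_{n,k}|+|b_{n,k}|\le \lambda_F(0)\sqrt{M^4-1}$ in the relevant range), and the univalence estimate is obtained after dividing through by $\lambda_F(0)$. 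Your Step~1 as written does not produce such a $\lambda_F(0)$-normalized bound, so either that step needs to be sharpened or the mechanism by which the leading $1$ appears needs to be explained differently.
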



Now the results due to Abdulhadi et al. \cite{AH2022} and Chen et al. \cite{CGH2000} motivate us to raise the following:

\bprob\label{HLP-prob1}
Can we establish the Landau-type theorems for bounded polyanalytic functions?
\eprob

The objective of this paper is to provide an affirmative answer to Problem \ref{HLP-prob1} (see Theorems \ref{thm3}-\ref{thm5}).

The paper is organized as follows.  In Section \ref{HLP-sec2}, we first establish the coefficients estimates for the bounded polyanalytic functions in $\ID$. In Section \ref{HLP-sec3}, we present different kinds of Landau-type theorems for the bounded polyanalytic functions in the unit disk.
In Section \ref{HLP-sec4}, as a consequence of our investigations, we present three versions of Landau-type theorems for certain $\log$-$\alpha$-analytic functions in $\ID$.

\section{Key Lemma, theorem and their Proofs} 
\label{HLP-sec2}
We begin to establish the following:

\begin{lemma}\label{lem0}
Suppose that $a>0$, $f(x)$ is continuous on $[0, a]$ and differentiable in $(0, a)$ and $\frac{f'(x)}{x}\leq f(x)$ in $(0, a)$. Then the function $e^{-\frac{x^2}{2}}f(x)$ is decreasing in $[0, a]$.
\end{lemma}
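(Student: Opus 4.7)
The natural approach is to differentiate the product $g(x) := e^{-x^2/2} f(x)$ and show that its derivative is non-positive on $(0,a)$. Since both $e^{-x^2/2}$ and $f$ are differentiable on $(0,a)$ (and $g$ is continuous on $[0,a]$ by the hypotheses on $f$), the product rule gives
\begin{equation*}
g'(x) \;=\; e^{-x^2/2}\,f'(x) \;-\; x\,e^{-x^2/2}\,f(x) \;=\; e^{-x^2/2}\bigl(f'(x) - x f(x)\bigr).
\end{equation*}

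Next I would translate the hypothesis into a usable form. For $x \in (0,a)$ we have $x > 0$, so the inequality $f'(x)/x \leq f(x)$ can be multiplied through by $x$ to yield
\begin{equation*}
f'(x) \;\leq\; x\,f(x), \qquad x \in (0,a).
\end{equation*}
Plugging this into the expression for $g'(x)$ and using $e^{-x^2/2} > 0$, we conclude
\begin{equation*}
g'(x) \;=\; e^{-x^2/2}\bigl(f'(x) - x f(x)\bigr) \;\leq\; 0 \quad \text{for all } x \in (0,a).
\end{equation*}

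Finally, since $g$ is continuous on $[0,a]$ and has non-positive derivative throughout the open interval $(0,a)$, the standard consequence of the mean value theorem implies that $g$ is (weakly) decreasing on $[0,a]$, which is exactly the claimed conclusion. There is no substantive obstacle here: the only mild care needed is to note that the division in the hypothesis is legitimate because $x > 0$ on $(0,a)$, and that continuity at the endpoint $x=0$ is what lets us extend monotonicity from the open interval to the closed interval.
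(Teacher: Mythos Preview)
Your proof is correct and follows essentially the same approach as the paper: define $g(x)=e^{-x^2/2}f(x)$, compute $g'(x)=e^{-x^2/2}(f'(x)-xf(x))\le 0$ on $(0,a)$ using the hypothesis, and invoke continuity at the endpoints to conclude monotonicity on $[0,a]$.
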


\begin{proof}
Let $g(x)=e^{-\frac{x^2}{2}}f(x)$ for $x\in [0, a]$. Then
\begin{eqnarray*}
g'(x)=(f'(x)-x f(x))e^{-\frac{x^2}{2}}\leq 0 ~\mbox{ for $x\in (0, a)$,}~
\end{eqnarray*}
which implies that the function $g(x)=e^{-\frac{x^2}{2}}f(x)$ is decreasing in $(0, a)$. Finally, the
continuity of $f$ on $[0,a]$ concludes the proof.
\end{proof}

Next, we recall a lemma from \cite{ZLK2025} and extend their condition from $0<\sigma< 1$ to $0<\sigma\leq 1$, and provide a new proof based on Lemma \ref{lem0}, which plays a key role in the proof of our main results.

\begin{lemma}{\rm \cite{ZLK2025}}\label{lem1}
Suppose that $\alpha \in \IN$, $0<\sigma\leq 1$, and  $0<\rho \leq 1$. Let $f$ be a $\log$-$\alpha$-analytic function satisfying $f(0)=\lambda_f(0)=1$, and  that $f$ is univalent in $\mathbb{D}_\rho$ and $F\left(\mathbb{D}_\rho\right) \supset \mathbb{D}_\sigma$, where $F(z)=\log f(z)$. Then the range $f\left(\mathbb{D}_\rho\right)$ contains a schlicht disk
$\mathbb{D}\left(w_0, r_0\right)=\left\{w \in \mathbb{C}:\, | w-w_0 \mid<r_0\right\}$, where
$$
w_0=\cosh \sigma, \quad r_0=\sinh \sigma .
$$
Moreover, if $\rho$ is the biggest univalent radius of $f$, then the radius $r_0=\sinh\sigma$ is sharp.
\end{lemma}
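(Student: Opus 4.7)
My plan is to convert the lemma into a purely geometric inclusion for the exponential image of a disk, and then to prove the resulting trigonometric-hyperbolic inequality by a second-order ODE argument with an integrating-factor (monotonicity) comparison step in the spirit of Lemma~\ref{lem0}.

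\textbf{Reduction.} Because $F=\log f$ with $F(0)=0$ and $F(\mathbb{D}_\rho)\supset\mathbb{D}_\sigma$, we have $f(\mathbb{D}_\rho)=\exp(F(\mathbb{D}_\rho))\supset\exp(\mathbb{D}_\sigma)$, so it suffices to show the geometric inclusion $\exp(\mathbb{D}_\sigma)\supset\mathbb{D}(\cosh\sigma,\sinh\sigma)$. Since $\sigma\leq 1<\pi$, the exponential is injective on $\mathbb{D}_\sigma$ and $\exp(\mathbb{D}_\sigma)$ is a Jordan domain with boundary $\{e^{\sigma e^{i\theta}}:\theta\in\mathbb{R}\}$; the point $\cosh\sigma$ lies inside (since $\log\cosh\sigma<\sigma$). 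Hence the inclusion is equivalent to $|e^{\sigma e^{i\theta}}-\cosh\sigma|\geq\sinh\sigma$ for every $\theta\in\mathbb{R}$, with equality attained at the tangency points $e^{\pm\sigma}$ ($\theta=0,\pi$).

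\textbf{Key inequality.} Expanding the squared modulus, using $\cosh^2\sigma-\sinh^2\sigma=1$, then dividing by $e^{\sigma\cos\theta}>0$, the displayed inequality collapses to
\[
\cosh(\sigma\cos\theta)\geq \cosh\sigma\cdot\cos(\sigma\sin\theta),\qquad \sigma\in(0,1],\;\theta\in[0,\tfrac{\pi}{2}],
\]
where the $\theta$-range reduction uses evenness. Set $\psi(\sigma):=\cosh(\sigma\cos\theta)-\cosh\sigma\cos(\sigma\sin\theta)$. A careful computation gives $\psi(0)=\psi'(0)=0$ and the identity
\[
\psi''(\sigma)=\cos^2\theta\cdot\psi(\sigma)+2\sin\theta\sinh\sigma\sin(\sigma\sin\theta).
\]
The forcing term is non-negative precisely because $\sigma\sin\theta\in[0,1]\subset[0,\pi]$; this is exactly where the upgrade from $\sigma<1$ (as in \cite{ZLK2025}) to $\sigma\leq 1$ is consumed. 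Writing $c=\cos\theta$, the auxiliary $\eta:=\psi'-c\psi$ satisfies $\eta(0)=0$ and $\eta'+c\eta\geq 0$, so the integrating factor $e^{c\sigma}$ forces $\eta\geq 0$; a second monotonicity argument with weight $e^{-c\sigma}$ applied to $\psi$ then yields $\psi\geq 0$ (the $c=0$ case reduces to convexity). The main obstacle I foresee is the careful derivation of the ODE identity, together with verifying that the forcing term stays non-negative right up to $\sigma=1$.

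\textbf{Sharpness.} Take $f(z)=e^z$ (the log-analytic case, $\alpha=1$). Then $f(0)=\lambda_f(0)=1$, $f$ is univalent on $\mathbb{D}$ so $\rho=1$, $F(z)=z$, and $F(\mathbb{D}_\rho)=\mathbb{D}$. For any admissible $\sigma\leq 1$, the boundary of $f(\mathbb{D}_\rho)=\exp(\mathbb{D})$ contains the points $e^{\pm\sigma}$, both sitting exactly on the circle $|w-\cosh\sigma|=\sinh\sigma$, so $r_0=\sinh\sigma$ cannot be enlarged. This matches the equality cases $\theta=0,\pi$ of the key inequality.
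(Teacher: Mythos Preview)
Your reduction to the inclusion $\mathbb{D}(\cosh\sigma,\sinh\sigma)\subset\exp(\mathbb{D}_\sigma)$ and the rewriting as
\[
\cosh(\sigma\cos\theta)\ \geq\ \cosh\sigma\,\cos(\sigma\sin\theta)
\]
coincide with the paper's. The divergence is in how this inequality is proved. The paper fixes $\sigma$, sets $x=\sigma\cos\theta$ and $g_\sigma(x)=\cosh x-\cosh\sigma\cos\sqrt{\sigma^2-x^2}$, shows $g_\sigma'(x)/x<g_\sigma(x)$ on $(0,\sigma)$ via the elementary bounds $\sin x/x>\cos x$ and $\sinh x/x<\cosh x$, and then invokes Lemma~\ref{lem0} to conclude that $e^{-x^2/2}g_\sigma(x)$ is decreasing; since $g_\sigma(\sigma)=0$ and $g_\sigma$ is even, the inequality follows. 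You instead fix $\theta$, view $\psi$ as a function of $\sigma$, derive the identity $\psi''=\cos^2\theta\cdot\psi+2\sin\theta\,\sinh\sigma\,\sin(\sigma\sin\theta)$ (this checks out), and cascade two first-order integrating-factor steps with weights $e^{\pm\sigma\cos\theta}$. Both arguments are correct and both localize where $\sigma\leq 1$ is consumed. Your route is self-contained and does not actually call Lemma~\ref{lem0}, despite the announcement; the paper's route is written precisely to showcase that lemma with its Gaussian weight $e^{-x^2/2}$.

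Your sharpness paragraph, however, has a gap. With $f(z)=e^z$ and $\rho=1$, the boundary of $f(\mathbb{D}_\rho)=\exp(\mathbb{D})$ is $\{e^{e^{i\theta}}:\theta\in\mathbb{R}\}$, which contains $e^{\pm 1}$ but \emph{not} $e^{\pm\sigma}$ for $\sigma<1$; those points lie strictly inside $\exp(\mathbb{D})$. Hence for $\sigma<1$ the disk $\mathbb{D}(\cosh\sigma,\sinh\sigma)$ is not tangent to $\partial f(\mathbb{D}_\rho)$ and can be enlarged within $f(\mathbb{D}_\rho)$, so this single example only witnesses sharpness at $\sigma=1$. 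The paper does not supply the construction either, deferring to \cite[Lemma~2.4]{LL2021}; to complete the argument you would need, for each $\sigma\in(0,1]$, a function $f$ with the stated normalizations and maximal univalent radius $\rho$ for which $\partial f(\mathbb{D}_\rho)$ actually meets the circle $|w-\cosh\sigma|=\sinh\sigma$.
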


\begin{proof}
Consider $F(z)=\log f(z)$, \text{ where } $f(z)$ is univalent in $\mathbb{D}_{\rho}$ and $F(\mathbb{D}_{\rho})\supset \mathbb{D}_{\sigma}$. Thus, we have
\begin{eqnarray*}
f(\mathbb{D}_{\rho})\supset e^{\mathbb{D}_{\sigma}}:=\{w=e^z|z\in \mathbb{D}_{\sigma}\}.
\end{eqnarray*}
Now, we wish to prove the following domain inclusion,
\begin{eqnarray}
\mathbb{D}(w_0, r_0)\subset e^{\mathbb{D}_{\sigma}}.
\label{liu28}
\end{eqnarray}
In fact, for $w=u+iv=e^{\sigma (\cos\theta+i\sin\theta)}\in\partial e^{\mathbb{D}_{\sigma}},\, \theta\in\mathbb{R}$, we have
$$
u = e^{\sigma\cos\theta}\cos(\sigma\sin\theta),\quad v = e^{\sigma\cos\theta}\sin(\sigma\sin\theta).
$$
Because $\partial e^{\mathbb{D}_{\sigma}}$ is symmetric with respect to the real axis, we only need to prove that
\begin{eqnarray*}
( u - \cosh\sigma)^2+v^2 = e^{2\sigma\cos\theta}-2\cosh\sigma\cos(\sigma\sin\theta)e^{\sigma\cos\theta}+\cosh^2\sigma\geq\sinh^2\sigma,\quad \theta\in [0, \pi],
\end{eqnarray*}
which is equivalent to
\begin{eqnarray}
 \cosh(\sigma\cos\theta)-\cosh\sigma\cos(\sigma\sin\theta)\geq 0,\quad \theta\in [0, \pi].
\label{liu210}
\end{eqnarray}
Thus, it suffices to prove the inequality (\ref{liu210}). For this purpose, we set $x=\sigma\cos\theta$. Then $\sigma\sin\theta = \sqrt{\sigma^2-x^2}$ for $\theta\in [0, \pi]$. For a fixed $\sigma\in (0, 1]$, set
$$
g_{\sigma}(x)=\cosh x-\cosh\sigma\cos(\sqrt{\sigma^2-x^2}) ~\mbox{ for $ x\in [-\sigma, \sigma].$}
$$
Obviously, $g_{\sigma}(x)$ is continuous on the closed interval $[-\sigma, \sigma]$, and
\begin{eqnarray*}
g_{\sigma}'(x)=\sinh x-\cosh\sigma\cdot\frac{x\sin(\sqrt{\sigma^2-x^2})}{\sqrt{\sigma^2-x^2}}, \quad x\in (-\sigma, \sigma).
\end{eqnarray*}

Note that
\begin{eqnarray*}
\frac{\sin x}{x}>\cos x ~\mbox{ and }~ \frac{\sinh x}{x}<\cosh x ~\mbox{ for $ x\in (0,1)$,}
\end{eqnarray*}
so that
\begin{eqnarray*}
\frac{g_{\sigma}'(x)}{x}&=&\frac{\sinh x}{x}-\cosh\sigma\cdot\frac{\sin(\sqrt{\sigma^2-x^2})}{\sqrt{\sigma^2-x^2}}\\
&<& \cosh  x-\cosh\sigma\cos(\sqrt{\sigma^2-x^2})=g_{\sigma}(x),
\end{eqnarray*}
for $x\in (0,\sigma)$. By Lemma \ref{lem0}, we obtain that $e^{-\frac{x^2}{2}}g_{\sigma}(x)$ is decreasing on $[0, \sigma]$ and thus, we have
\begin{eqnarray*}
e^{-\frac{x^2}{2}}g_{\sigma}(x)\geq e^{-\frac{\sigma^2}{2}}g_{\sigma}(\sigma)=0 ~\mbox{ for $x\in [0, \sigma].$}
\end{eqnarray*}
Note that $g_{\sigma}(x)$ is an even function of $x$ in $[-\sigma, \sigma]$ which implies that $g_{\sigma}(x)\geq 0$ for $x\in [-\sigma, \sigma]$. Thus, the inequality (\ref{liu210}) holds; that is, the inclusion relation (\ref{liu28}) holds and hence, the range $f(\mathbb{D}_{\rho})$ contains a schlicht disk $\mathbb{D}(w_0, r_0)$.

The proof of the sharpness of $r_0=\sinh\sigma$ is the same as that of \cite[Lemma 2.4]{LL2021}.
\end{proof}




\begin{remark}
We note that in the proof of \cite[Lemma 2.4]{LL2021}, the authors used a fact:
For a fixed $\sigma\in (0, 1)$, the function
\begin{eqnarray*}
h_{\sigma}(\theta):=\cosh\sigma\sin(\sigma\sin\theta)-\sinh(\sigma\cos\theta)\tan\theta
\end{eqnarray*}
is increasing for $\theta\in [0, \pi/2)$. In fact, it is non-trivial to prove this fact. The proof of Lemma \ref{lem1} also provides a new and elegant proof of Lemma 2.4 of \cite{LL2021}.
\end{remark}

Now, we establish the coefficient estimates for bounded polyanalytic functions 
in the unit disk, which has an independent interest. 

\begin{theorem}\label{thm1} Suppose that $\alpha \geq 1$ and $F(z)=\sum_{k=0}^{\alpha-1} \bar{z}^k A_k(z)$ is a polyanalytic function of order $\alpha$ on $\mathbb{D}$ and $|F(z)|\leq M$ for $z\in\mathbb{D}$. Assume that for each $k\in\{0,\ldots , \alpha-1\}$, $A_k(z)=\sum_{n=0}^{\infty} a_{n, k} z^n$ is analytic on $\mathbb{D}$, and all its non-zero coefficients $a_{n_1,k_1}, a_{n_2,k_2}$ satisfy
\begin{equation}
\left|\arg \frac{a_{n_1, k_1}}{a_{n_2, k_2}}\right| \leq \frac{\pi}{2}
\label{2-3}
\end{equation}
 for each $k_1,k_2\in\{0,\ldots , \alpha-1\}$ and $n_1, n_2\in\mathbb{N}$ with $k_1\neq k_2,\, n_1\neq n_2$.
\begin{enumerate}
\item[{\rm (1)}] If $A_k\in\mathcal{A}$ for each $k\in \{0,\ldots , \alpha-1\}$, then $M\geq \sqrt{\alpha}~ (\geq 1)$, and
\begin{eqnarray}
\sum_{k=0}^{\alpha-1} \sum_{n=1}^{\infty}\left|a_{n,k}\right|^2 \leqslant M^2.
\label{2-4}
\end{eqnarray}
In particular, we have
\begin{eqnarray}
\left|a_{n, k}\right| \leq \sqrt{M^2-\alpha}  ~\mbox{ for $ n \geq 2$,  and $ k=0,1,\ldots, \alpha-1$}.
\label{2-5}
\end{eqnarray}

\item[{\rm (2)}]
 If $F(0)=\left|J_F(0)\right|-1=0$, then for each $(n, k) \neq(1,0),(0,1),$ we have
\begin{eqnarray}
 \left|a_{n, k}\right| \leq \sqrt{M^2-1},
 \label{2-6}
\end{eqnarray}
 and
\begin{eqnarray}
\lambda_F(0) \geq \lambda_0(M):=\frac{\sqrt{2}}{\sqrt{M^2-1}\,+\sqrt{M^2+1}}.
\label{2-7}
\end{eqnarray}

\item[{\rm (3)}]
If $F(0)=\lambda_F(0)-1=0$, then for each $(n, k) \neq(1,0),(0,1)$, we have
\begin{eqnarray}
 \left|a_{n, k}\right| \leq \sqrt{M^2-1}.
 \label{2-8}
\end{eqnarray}
\end{enumerate}
\end{theorem}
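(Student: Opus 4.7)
The plan is to run a Parseval-type argument on circles of radius $r < 1$, then let $r \to 1^-$ by monotone convergence. Write
\begin{equation*}
F(z) = \sum_{k=0}^{\alpha-1}\sum_{n=0}^{\infty} a_{n,k}\, z^n \bar{z}^k,
\end{equation*}
and compute, for $z = re^{i\theta}$,
\begin{equation*}
\frac{1}{2\pi}\int_0^{2\pi} |F(re^{i\theta})|^2\, d\theta
= \sum_{\substack{(n_1,k_1),(n_2,k_2) \\ n_1-k_1 = n_2-k_2}} a_{n_1,k_1}\,\overline{a_{n_2,k_2}}\, r^{n_1+k_1+n_2+k_2}.
\end{equation*}
The diagonal terms $(n_1,k_1) = (n_2,k_2)$ contribute $|a_{n,k}|^2 r^{2(n+k)}$. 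The off-diagonal pairs satisfy $n_1 \neq n_2$ together with $k_1 \neq k_2$ (since $n_1-k_1 = n_2-k_2$), hence the argument hypothesis \eqref{2-3} forces $\Re(a_{n_1,k_1}\overline{a_{n_2,k_2}}) \geq 0$, so the cross terms are non-negative. Combining with $|F| \leq M$ yields
\begin{equation*}
\sum_{k=0}^{\alpha-1}\sum_{n=0}^{\infty} |a_{n,k}|^2 r^{2(n+k)} \leq M^2,
\end{equation*}
and letting $r\to 1^-$ gives $\sum_{k,n}|a_{n,k}|^2 \leq M^2$.

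For part (1), the assumption $A_k \in \mathcal{A}$ forces $a_{0,k} = 0$ and $a_{1,k} = 1$ for every $k$, so the $\alpha$ terms $|a_{1,k}|^2$ already contribute $\alpha$ to the sum, yielding $M^2 \geq \alpha$ and $|a_{n,k}|^2 \leq M^2 - \alpha$ for $n \geq 2$. For parts (2) and (3), $F(0)=0$ gives $a_{0,0} = 0$; a direct computation shows $F_z(0) = a_{1,0}$ and $F_{\bar z}(0) = a_{0,1}$, so that $J_F(0) = |a_{1,0}|^2 - |a_{0,1}|^2$ and $\lambda_F(0) = \bigl||a_{1,0}|-|a_{0,1}|\bigr|$. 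In case (2), $|J_F(0)|=1$ directly gives $|a_{1,0}|^2 + |a_{0,1}|^2 \geq 1$; in case (3), $\bigl||a_{1,0}|-|a_{0,1}|\bigr|=1$ implies $|a_{1,0}|^2 + |a_{0,1}|^2 = 1 + 2|a_{1,0}||a_{0,1}| \geq 1$. In both cases, subtracting this lower bound from $M^2$ in the Parseval inequality yields $|a_{n,k}| \leq \sqrt{M^2-1}$ whenever $(n,k) \neq (1,0),(0,1)$.

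For the sharper estimate \eqref{2-7}, I would use the factorization $|J_F(0)| = \bigl||a_{1,0}|-|a_{0,1}|\bigr|\cdot(|a_{1,0}|+|a_{0,1}|)$, which rewrites $\lambda_F(0) = 1/(|a_{1,0}|+|a_{0,1}|)$. Setting $u = |a_{1,0}|^2$, $v = |a_{0,1}|^2$ (WLOG $u\geq v$), the constraints $u-v=1$ and $u+v \leq M^2$ give $u \leq (M^2+1)/2$ and $v \leq (M^2-1)/2$. Thus $|a_{1,0}|+|a_{0,1}| \leq (\sqrt{M^2+1}+\sqrt{M^2-1})/\sqrt{2}$, which inverts to the claimed lower bound $\lambda_F(0) \geq \lambda_0(M)$.

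The main obstacle is justifying the term-by-term handling of the double series: one must verify that the off-diagonal surviving pairs are exactly those with $n_1\neq n_2$ and $k_1\neq k_2$ (so that hypothesis \eqref{2-3} indeed applies), and that the interchange of sum and integral is legitimate for $r<1$. The passage $r\to 1^-$ is then routine by monotone convergence since the partial sums on the left are non-decreasing in $r$ and uniformly bounded by $M^2$. Everything else reduces to elementary algebraic manipulations with the first two coefficients $a_{1,0}$ and $a_{0,1}$.
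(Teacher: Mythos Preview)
Your proof is correct and follows essentially the same approach as the paper: a Parseval computation on circles $|z|=r$, the observation that surviving cross terms have $n_1\neq n_2$ and $k_1\neq k_2$ so that hypothesis~\eqref{2-3} makes them non-negative, then letting $r\to 1^-$; the algebra with $a_{1,0}$ and $a_{0,1}$ in parts~(2) and~(3) is likewise the same. Your derivation of~\eqref{2-7} via the identity $\lambda_F(0)=1/(|a_{1,0}|+|a_{0,1}|)$ is a slightly cleaner packaging of the paper's computation (which instead writes $\lambda_F(0)=\sqrt{1+|a_{0,1}|^2}-|a_{0,1}|$ and rationalizes), but the substance is identical.
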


\begin{proof} (1) By assumption $A_k\in \mathcal{A}$, and thus, we have $a_{0, k}=0, a_{1, k}=1$ for each $k\in\{0,\ldots ,\alpha-1\}$.

Note that for $z=r\, e^{i\theta}\, (0<r<1),\, \theta\in [0, 2\pi]$, we have
\begin{eqnarray*}
F(r e^{i\theta})&=&\sum_{k=0}^{\alpha-1} \bar{z}^k A_k(z)=\sum_{k=0}^{\alpha-1}\sum_{n=0}^{\infty} a_{n, k} r^{n+k}\, e^{i(n-k)\theta}\\
&=&\sum_{j=-\alpha+1}^{-1}\Big(\sum_{k=-j}^{\alpha-1}a_{j+k, k} r^{j+2k}\Big) e^{ij\theta}+\sum_{j=0}^{\infty}\Big(\sum_{k=0}^{\alpha-1}a_{j+k, k} r^{j+2k}\Big) e^{ij\theta}.
\end{eqnarray*}

A standard argument using Parseval's identity gives
\begin{eqnarray*}
\frac{1}{2 \pi} \int_0^{2 \pi}\left|F\big (r e^{i \theta}\big)\right|^2 d \theta=\sum_{j=-\alpha+1}^{-1}\Big|\sum_{k=-j}^{\alpha-1}a_{j+k, k} r^{j+2k}\Big|^2 +\sum_{j=0}^{\infty}\Big|\sum_{k=0}^{\alpha-1}a_{j+k, k} r^{j+2k}\Big|^2.
\end{eqnarray*}

In view of the condition (\ref{2-3}), it follows that $\mathrm{Re}(a_{n_1,k_1}\overline{a_{n_2,k_2}})\ge 0$ for each $k_1,k_2\in\{0,\ldots , \alpha-1\}$ and $n_1, n_2\in\mathbb{N}$ with $k_1\neq k_2,\, n_1\neq n_2$, and thus, as $\left|F\left(r e^{i \theta}\right)\right| \leq M$, we obtain that
\begin{equation*}
\sum_{j=-\alpha+1}^{-1}\sum_{k=-j}^{\alpha-1}|a_{j+k, k}|^2 r^{2j+4k} +\sum_{j=0}^{\infty}\sum_{k=0}^{\alpha-1}|a_{j+k, k}|^2 r^{2j+4k}\leq\frac{1}{2 \pi} \int_0^{2 \pi}\left|F\big (r e^{i \theta}\big)\right|^2 d \theta\leq M^2.
\end{equation*}

Letting $r \to 1^{-}$ in the above inequality, note $a_{0, k}=0$ for each $k\in\{0,\ldots ,\alpha-1\}$, we obtain that 
\begin{equation*}
\sum_{j=-\alpha+1}^{-1}\sum_{k=-j}^{\alpha-1}|a_{j+k, k}|^2 +\sum_{j=0}^{\infty}\sum_{k=0}^{\alpha-1}|a_{j+k, k}|^2=\sum_{k=0}^{\alpha-1} \sum_{n=1}^{\infty}\left|a_{n, k}\right|^2\leq M^2.
\end{equation*}
Hence, the inequality (\ref{2-4}) holds.

As $a_{1,k}=1$ for each $k\in\{0,\ldots,\alpha-1\}$, (\ref{2-4}) is equivalent to

$$\alpha+ \sum_{k=0}^{\alpha-1} \sum_{n=2}^{\infty}\left|a_{n, k}\right|^2 \leq M^2,$$

and the inequalities (\ref{2-5}) hold and $M\geq\sqrt{\alpha}\geq 1$.

(2) A direct calculation shows that
\begin{equation*}
F_z(z)=\sum_{k=0}^{\alpha-1} \overline{z}^k A_k^{\prime }(z)=\sum_{k=0}^{\alpha-1} \overline{z}^k \sum_{n=1}^{\infty} n a_{n, k} z^{n-1},
\end{equation*}
and
\begin{equation*}
F_{\bar{z}}(z)=\sum_{k=0}^{\alpha-1} k \overline{z}^{k-1} A_k(z)=\sum_{k=0}^{\alpha-1} k \overline{z}^{k-1} \sum_{n=0}^{\infty} a_{n, k} z^n.
\end{equation*}

Thus, $F_z(0)=A_0^{\prime }(0)=a_{1,0}$ and $F_z(0)=A_1(0)=a_{0,1}$. Since $F(0)=|J_F(0)|-1=0$, we have $a_{0,0}=0$ and $\left|\left|a_{1,0}\right|^2-\left|a_{0,1}\right|^2\right|=1$, it follows from the triangle inequality that
\begin{eqnarray}
\left|a_{1,0}\right|^2+\left|a_{0,1}\right|^2 \geq\left|\left|a_{1,0}\right|^2-\left|a_{0,1}\right|^2\right|=1.
\label{2-11}
\end{eqnarray}

Similar to the proof of the case (1), we get
\begin{eqnarray}
\sum_{k=0}^{\alpha-1} \sum_{n=0}^{\infty}\left|a_{n, k}\right|^2 =\sum_{k=0}^{\alpha-1}\left(\left|a_{0, k}\right|^2+\left|a_{1, k}\right|^2+\sum_{n=2}^{\infty}\left|a_{n, k}\right|^2\right)\leq M^2,
\label{2-12}
\end{eqnarray}
or equivalently,
\begin{eqnarray*}
\begin{aligned} \sum_{k=0}^{\alpha-1} \sum_{n=0}^{\infty}\left|a_{n, k}\right|^2& =\left|a_{0,1}\right|^2+\left|a_{1,0}\right|^2+\sum_{k=2}^{\alpha-1}\left|a_{0, k}\right|^2+\sum_{k=1}^{\alpha-1}\left|a_{1, k}\right|^2+\sum_{k=0}^{\alpha-1} \sum_{n=2}^{\infty}\left|a_{n, k}\right|^2 \leq M^2\end{aligned}
\end{eqnarray*}
(since $a_{0,0}=0$). From (\ref{2-11}) and (\ref{2-12}), we obtain
\begin{eqnarray*}
\sum_{k=2}^{\alpha-1}\left|a_{0, k}\right|^2+\sum_{k=1}^{\alpha-1}\left|a_{1, k}\right|^2+\sum_{k=0}^{\alpha-1} \sum_{n=2}^{\infty}\left|a_{n, k}\right|^2 &\leq M^2-\left(\left|a_{0,1}\right|^2+\left|a_{1,0}\right|^2\right) &\leq M^2-1,
\end{eqnarray*}
and hence, for each $(n, k) \neq(1,0),(0,1)$, we have $ \left|a_{n, k}\right| \leq \sqrt{M^2-1}$ and $M\geq 1$.

If $J_F(0)=1$, then we have
\begin{eqnarray*}
\left|a_{1,0}\right|=\sqrt{1+\left|a_{0,1}\right|^2} \geq 1.
\end{eqnarray*}

Also from (\ref{2-12}), we have
\begin{eqnarray}
\left|a_{0,1}\right|^2+\left|a_{1,0}\right|^2 \leq M^2,
\label{2-14}
\end{eqnarray}
and a comparison with (\ref{2-14}) shows that $M\geq 1$, and
\begin{eqnarray}
\left|a_{0,1}\right| \leq \sqrt{\frac{M^2-1}{2}}.
\label{2-15}
\end{eqnarray}
Thus we find that
$$
\begin{aligned} \lambda_F(0)&=\left|a_{1,0}\right|-\left|a_{0,1}\right|=\sqrt{1+\left|a_{0,1}\right|^2}-\left|a_{0,1}\right|
=\frac{1}{\sqrt{1+\left|a_{0,1}\right|^2}+\left|a_{0,1}\right|}
\\& \geq \frac{\sqrt{2}}{\sqrt{M^2-1}\,+\sqrt{M^2+1}}=:\lambda_0(M).
\end{aligned}
$$

If $J_F(0)=-1$, then by a proof similar as above, we find that 
the inequality (\ref{2-7}) holds.

(3) Since $\big |\vert a_{1,0}\vert-\vert a_{0,1}\vert\big |=\lambda_F(0)=1$, we see that $\vert a_{1,0}\vert^2+\vert a_{0,1}\vert^2\ge 1$. Thus it follows from (\ref{2-12}) that
\begin{eqnarray*}
\sum_{k=2}^{\alpha-1}\left|a_{0, k}\right|^2+\sum_{k=1}^{\alpha-1}\left|a_{1, k}\right|^2+\sum_{k=0}^{\alpha-1} \sum_{n=2}^{\infty}\left|a_{n, k}\right|^2 &\leq M^2-\left(\left|a_{0,1}\right|^2+\left|a_{1,0}\right|^2\right) &\leq M^2-1,
\end{eqnarray*}
and hence (\ref{2-8}) holds. The proof of the theorem is complete.
\end{proof}

From Theorem \ref{thm1}, we have the following corollary.

\begin{corollary}\label{cor2}
Suppose that $\alpha \geq 2$ and $F(z)=\sum_{k=0}^{\alpha-1} \bar{z}^k A_k(z)$ is a polyanalytic function with $F(0)=0$, $|F(z)|\leq 1$ in $\mathbb{D}$, and all its non-zero coefficients $a_{n,k_1}, a_{n,k_2}$ satisfy \eqref{2-3}.
\begin{enumerate}
\item[{\rm (i)}]
 If $J_F(0)=1$, then $F(z)=\alpha z$, where $|\alpha|=1$. Also, if $J_F(0)=-1$, then $F(z)=\beta \overline{z}$, where $|\beta|=1$.
 \item[{\rm (ii)}]
If $\lambda_F(0)=1$, then either $F(z)=\gamma z$ or $F(z)=\gamma \overline{z}$, where $|\gamma|=1$.
\end{enumerate}
\end{corollary}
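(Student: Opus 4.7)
The plan is to apply Theorem \ref{thm1} directly with $M=1$, which forces almost all coefficients of $F$ to vanish, reducing $F$ to a two-term expression, and then use the assumed normalization ($J_F(0)=\pm 1$ or $\lambda_F(0)=1$) together with the remaining $L^2$-type coefficient bound to pin down the two surviving coefficients.

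First, for part (i), I would invoke Theorem \ref{thm1}(2) with $M=1$: the bound $|a_{n,k}|\leq\sqrt{M^2-1}=0$ kills every coefficient except possibly $a_{1,0}$ and $a_{0,1}$, so $F(z)=a_{1,0}z+a_{0,1}\overline{z}$. The assumption $|J_F(0)|=1$ reads $\bigl||a_{1,0}|^2-|a_{0,1}|^2\bigr|=1$, while inequality (\ref{2-12}) (valid because $a_{0,0}=0$) gives $|a_{1,0}|^2+|a_{0,1}|^2\leq 1$. In the case $J_F(0)=1$, adding the two relations yields $|a_{1,0}|^2\leq 1$ and subtracting yields $|a_{0,1}|^2\leq 0$, forcing $a_{0,1}=0$ and $|a_{1,0}|=1$; the case $J_F(0)=-1$ is symmetric and gives $a_{1,0}=0$, $|a_{0,1}|=1$.

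For part (ii), the argument is entirely analogous but uses Theorem \ref{thm1}(3): again $M=1$ annihilates every coefficient outside $\{a_{1,0},a_{0,1}\}$, so $F(z)=a_{1,0}z+a_{0,1}\overline{z}$. Now $\lambda_F(0)=\bigl||a_{1,0}|-|a_{0,1}|\bigr|=1$, which squares to $|a_{1,0}|^2+|a_{0,1}|^2-2|a_{1,0}||a_{0,1}|=1$. Combining with $|a_{1,0}|^2+|a_{0,1}|^2\leq 1$ gives $|a_{1,0}||a_{0,1}|\leq 0$, so at least one of the two coefficients vanishes, and then the normalization forces the other to have modulus one. This yields exactly the two stated forms $F(z)=\gamma z$ or $F(z)=\gamma\overline{z}$ with $|\gamma|=1$.

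There is essentially no obstacle here: the whole corollary is an extraction of the equality case $M=1$ of Theorem \ref{thm1}. The only minor care point is to notice that (\ref{2-12}) is derived in the proof of Theorem \ref{thm1}(2) using only $F(0)=0$ (hence $a_{0,0}=0$) and $|F|\leq M$, not the specific value of $J_F(0)$, so it is equally available for the $\lambda_F(0)=1$ hypothesis in part (ii); this justifies applying the same two-term squared-modulus bound in both parts.
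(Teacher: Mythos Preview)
Your proposal is correct and follows essentially the same route as the paper: apply Theorem \ref{thm1}(2) and (3) with $M=1$ to annihilate all coefficients except $a_{1,0}$ and $a_{0,1}$, then use the normalization together with the $L^2$ bound \eqref{2-12} to force one of the two survivors to vanish. Your argument for part (ii) is in fact more explicit than the paper's, which jumps directly from $a_{n,k}=0$ for $(n,k)\neq(1,0),(0,1)$ to the conclusion without spelling out why one of $a_{1,0},a_{0,1}$ must be zero; your squaring-and-combining step fills that gap cleanly.
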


\begin{proof}(1) If $J_F(0)=1$, then it follows from (\ref{2-6}) that $ a_{n, k} =0$,  for each $(n, k) \neq(1,0),(0,1)$.
From (\ref{2-15}), we get that $ a_{0, 1} =0$ and thus, $\left|a_{1,0}\right|=1$. Then $F(z)=\alpha z$, where $|\alpha|=1$.

If $J_F(0)=-1$, then as above, we have $ a_{1, 0} =0$ and $\left|a_{0,1}\right|=1$ showing that $F(z)=\beta \overline{z}$, where $|\beta|=1$.

(2) If $\lambda_F(0)=1$, then, it follows from (\ref{2-8}) that $ a_{n, k} =0$  for each $(n, k) \neq(1,0),(0,1)$, which gives that $F(z)=\gamma z$ or $F(z)=\gamma \overline{z}$, where $|\gamma|=1$.
\end{proof}

\section{Landau-type theorems for polyanalytic 
functions}\label{HLP-sec3}

In this section, we establish three versions of  Landau-type theorems  for normalized and bounded polyanalytic functions in the unit disk.

\begin{theorem}\label{thm3}
Suppose that $M\geq\sqrt{\alpha}~(\geq 1)$,  and $F(z)=\sum_{k=0}^{\alpha-1} \bar{z}^k A_k(z)$ is a polyanalytic function satisfying the conditions of Theorem \ref{thm1}, where $A_k\in\mathcal{A}$ for each $k\in\{0,\ldots, \alpha-1\}$. If $|F(z)|\leq M$ for all $z\in\mathbb{D}$, then $F$ is univalent in the disk $\mathbb{D}_{r_1}$ and $F\left(\mathbb{D}_{r_1}\right)$ contains a schlicht disk $\mathbb{D}_{\sigma_1}$, where $r_1$ is the least positive root of the equation
\begin{eqnarray} \label{phi_r}
2- \frac{1-(\alpha +1)r^{\alpha}+\alpha r^{\alpha +1}}{(1-r)^2} -  \frac{r\, \sqrt{M^2-\alpha}}{(1-r)^3} \Big[2+ r^{\alpha}\{\alpha r-(\alpha +2)\}\Big] =0
\end{eqnarray}
and
\begin{eqnarray*}
\sigma_1=r_1-\frac{r_1^2-r_1^{\alpha+1}}{1-r_1}-\sqrt{M^2-\alpha}\, \frac{(1-r_1^\alpha)r_1^2}{(1-r_1)^2}.
\end{eqnarray*}
In particular, if $|F(z)|\leq 1$ for all $z\in\mathbb{D}$, then $F$ is univalent and maps $\mathbb{D}$ onto $\mathbb{D}$.
\end{theorem}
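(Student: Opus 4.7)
The plan is to exploit the coefficient bounds from Theorem \ref{thm1}(1) --- namely $a_{0,k}=0$, $a_{1,k}=1$ and $|a_{n,k}|\le\sqrt{M^2-\alpha}$ for $n\ge 2$ --- together with $F_z(0)=A_0'(0)=1$ and $F_{\bar z}(0)=A_1(0)=0$. Near the origin $F$ is a perturbation of the identity, and both the univalence radius and the schlicht disk will follow from uniform bounds on $|F_z-1|$ and $|F_{\bar z}|$ over $\mathbb{D}_r$.

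For univalence, I would take $z_1\ne z_2$ in $\mathbb{D}_r$ with $r<r_1$ and parametrise $z(t)=(1-t)z_1+tz_2$ to obtain
\[
|F(z_2)-F(z_1)|\ge|z_2-z_1|\Bigl(1-\max_{|z|\le r}|F_z(z)-1|-\max_{|z|\le r}|F_{\bar z}(z)|\Bigr).
\]
Writing $F_z(z)-1=\sum_{k=1}^{\alpha-1}\bar z^k+\sum_{k=0}^{\alpha-1}\bar z^k\sum_{n\ge 2}na_{n,k}z^{n-1}$ and $F_{\bar z}(z)=\sum_{k=1}^{\alpha-1}k\bar z^{k-1}A_k(z)$, and inserting $|a_{n,k}|\le\sqrt{M^2-\alpha}$ together with $|A_k(z)|\le r+\sqrt{M^2-\alpha}\,r^2/(1-r)$, the two maxima become closed-form rational functions of $r$. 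Collecting terms using the telescoping identity
\[
(1-r^\alpha)(2-r)+r\bigl(1-\alpha r^{\alpha-1}+(\alpha-1)r^\alpha\bigr)=2+r^\alpha\bigl(\alpha r-(\alpha+2)\bigr),
\]
which collapses the $\sqrt{M^2-\alpha}$-contribution, together with the elementary rearrangement $1-\frac{2r-r^2-B(r)}{(1-r)^2}=2-\frac{1-B(r)}{(1-r)^2}$ on the polynomial part, the condition $\max|F_z-1|+\max|F_{\bar z}|<1$ becomes exactly $\phi(r)>0$, where $\phi(r)$ is the left-hand side of \eqref{phi_r}. Since $\phi(0)=1$ and (for $M>\sqrt{\alpha}$) the bracket $2+r^\alpha(\alpha r-(\alpha+2))$ vanishes to first order at $r=1$ while $(1-r)^{-3}$ blows up, the second summand of $\phi$ is of order $(1-r)^{-2}$, so $\phi(r)\to-\infty$; hence $\phi$ has a least positive root $r_1\in(0,1)$ and $F$ is univalent on $\mathbb{D}_{r_1}$.

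For the schlicht disk I would bound $|F(z)-z|$ on $|z|=r_1$ via $F(z)-z=(A_0(z)-z)+\sum_{k=1}^{\alpha-1}\bar z^k A_k(z)$, which gives
\[
|F(z)-z|\le\frac{r_1^2-r_1^{\alpha+1}}{1-r_1}+\sqrt{M^2-\alpha}\left(\frac{r_1^2}{1-r_1}+\frac{r_1^3-r_1^{\alpha+2}}{(1-r_1)^2}\right),
\]
and the cancellation $r_1^2(1-r_1)+r_1^3-r_1^{\alpha+2}=r_1^2(1-r_1^\alpha)$ collapses the $\sqrt{M^2-\alpha}$-term to $r_1^2(1-r_1^\alpha)/(1-r_1)^2$, reproducing the form in the statement. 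Hence $|F(z)|\ge r_1-|F(z)-z|\ge\sigma_1$ on $|z|=r_1$, and since $F$ is continuous on $\overline{\mathbb{D}_{r_1}}$, univalent on $\mathbb{D}_{r_1}$ and $F(0)=0$, the standard boundary argument ($\partial F(\mathbb{D}_{r_1})\subset F(\partial\mathbb{D}_{r_1})$ together with connectedness of $\mathbb{D}_{\sigma_1}$) yields $\mathbb{D}_{\sigma_1}\subset F(\mathbb{D}_{r_1})$. The $|F|\le 1$ postscript is immediate: $M\le 1$ with $M\ge\sqrt{\alpha}\ge 1$ forces $\alpha=M=1$, and Schwarz's lemma applied to $F=A_0\in\mathcal{A}(1)$ with $A_0'(0)=1$ gives $F(z)\equiv z$.

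The main obstacle is the algebraic bookkeeping in reducing both $\max|F_z-1|+\max|F_{\bar z}|$ and $\max_{|z|=r_1}|F-z|$ to the exact forms in the statement; the two collapsing identities above are short but organising the four nested geometric sums (in $k$ and in $n$) so that they combine cleanly is where slips are most likely. Everything else --- the existence of $r_1$, the covering step, and the $M=1$ degeneracy --- is routine.
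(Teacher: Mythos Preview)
Your proposal is correct and follows essentially the same route as the paper: the univalence argument via the line-integral bound $|F(z_2)-F(z_1)|\ge|z_2-z_1|(1-\sup|F_z-1|-\sup|F_{\bar z}|)$ and the coefficient estimates of Theorem~\ref{thm1}(1) produce exactly the paper's function $\varphi(r)$, and your covering estimate via $|F(z)|\ge r_1-|F(z)-z|$ reproduces the paper's direct expansion of $|F(w)|$ on $|w|=r_1$. Your treatment of the $|F|\le1$ postscript---noting that $M\ge\sqrt{\alpha}$ forces $\alpha=1$ and then invoking Schwarz---is in fact a shade more direct than the paper's, which appeals to Corollary~\ref{cor2}.
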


\begin{proof} We first prove that $F$ is univalent in the disk $\mathbb{D}_{r_1}$. By the assumption on $A_k$, we have $A_k(0)=0$ and $A_k^{\prime}(0)=1$, and thus, $a_{0, k}=0$  and $a_{1, k}=1$ for $k \in\{0, \ldots, \alpha-1\}.$

A direct calculation shows that
\begin{equation}
F_z(z)=\sum_{k=0}^{\alpha-1} \overline{z}^k A_k^{\prime }(z)=A_0^\prime(z)+\sum_{k=1}^{\alpha-1} \overline{z}^k \sum_{n=1}^{\infty} n a_{n, k} z^{n-1},
\label{liu1}
\end{equation}
and
\begin{equation}
F_{\bar{z}}(z)=\sum_{k=1}^{\alpha-1} k \overline{z}^{k-1} A_k(z)=\sum_{k=1}^{\alpha-1} k \overline{z}^{k-1} \sum_{n=0}^{\infty} a_{n, k} z^n.
\label{liu2}
\end{equation}

Note that $F_z(0)=A_0^{\prime }(0)=a_{1,0}=1,$ and $F_{\bar{z}}(0)=A_1(0)=a_{0,1}=0$. Then for any $z_1 \neq z_2$, where $z_1, z_2 \in \mathbb{D}_r$ and $r \in(0,1)$, by (\ref{2-5}), we have

\vspace{6pt}
$ |F(z_1)-F(z_2)|$
\begin{eqnarray*}
&=&\left|\int_{[z_1, z_2]} F_z d z+F_{\bar{z}} d \bar{z}\right| \\
& = &\left|\int_{\left[z_1, z_2\right]} F_z(0) d z+F_z(0) d \bar{z}+\int_{[z_1, z_2]}\left(F_z(z)-F_z(0)\right) d z+\left(F_{\bar{z}}(z)-F_{\bar{z}}(0)\right) d \bar{z}\right| \\
& \geqslant &\left|\int_{[z_1, z_2]} d z\right|-\left|\int_{[z_1, z_2]}\left(A_0^{\prime}(z)-A_0^{\prime}(0)\right) d z\right|-\left|\int_{[z_1, z_2]} \sum_{k=1}^{\alpha-1}\left[\bar{z}^k A_k^{\prime}(z) d z+k \bar{z}^{k-1} A_k(z)d \bar{z}\right] \right|  \\
& \geqslant &\left|z_2-z_1\right|-\int_{\left[z_1, z_2\right]} \sum_{n=2}^{\infty} n\left|a_{n,0}\right| r^{n-1}|d z|-\int_{\left[z_1, z_2\right]} \sum_{k=1}^{\alpha-1}\left|\bar{z}^k\right|\left(\Big|A_k^{\prime}(z)\Big|+k\Big|\frac{A_k(z)}{z}\Big|\,\right) |d z| \\
& \geqslant &\left|z_2-z_1\right|\left(1-\sum_{n=1}^{\infty}(n+1)\left|a_{n+1, 0}\right| r^n-\sum_{k=1}^{\alpha-1} r^k \sum_{n=1}^{\infty}\left(n\left|a_{n, k}\right|+k\left|a_{n, k}\right|\right) r^{n-1}\right) \\
\end{eqnarray*}\begin{eqnarray*}
& = &\left|z_2-z_1\right|\left\{1-\sum_{n=1}^{\infty}(n+1)\left|a_{n+1, 0}\right| r^n-\sum_{k=1}^{\alpha-1} r^k\left[1+k+\sum_{n=2}^{\infty}\left(n\left|a_{n, k}\right|+k\left|a_{n, k}\right|\right) r^{n-1}\right]\right\} \\
& \geqslant &\left|z_2-z_1\right|\left\{1-\sum_{n=1}^{\infty} \sqrt{M^2-\alpha}\,(n+1) r^n-\sum_{k=1}^{\alpha -1} r^k\left[1+k+\sum_{n=2}^{\infty} \sqrt{M^2-\alpha}\,(n+k) r^{n-1}\right]\right\} \\
&\geq&\left|z_2-z_1\right| \varphi(r),
\end{eqnarray*}
where
\be\label{ex-eq5}
\varphi(r)= 1-\sqrt{M^2-\alpha} \,\frac{2 r-r^2}{(1-r)^2}-\sum_{k=1}^{\alpha-1} r^k\left[1+k+\frac{(k+2) r-(k+1) r^2}{(1-r)^2} \sqrt{M^2-\alpha}\right] .
\ee
We now simplify the last expression and claim that $\varphi(r)=0$ is equivalent to \eqref{phi_r}.
In order to prove this claim, we use the following well-known identities:
\be\label{ex-eq1}
 \sum_{k=0}^{m+1}r^k =\frac{1-r^{m+2}}{1-r}
\ee
and by differentiating the last relation with respect to $r$ leads to
\be\label{ex-eq2}
 \sum_{k=1}^{m}(k+1)r^k =\frac{1-(m+2)r^{m+1}+(m+1)r^{m+2}}{(1-r)^2}-1.
\ee
In particular,  $m=\alpha -1$ leads to
\be\label{ex-eq3}
 \sum_{k=1}^{\alpha -1}(k+2)r^{k+1} =  \sum_{k=1}^{\alpha}(k+1)r^{k}  -2r = \frac{1-(\alpha+2)r^{\alpha+1}+(\alpha+1)r^{\alpha+2}}{(1-r)^2}-1-2r.
\ee
In view of \eqref{ex-eq1}, \eqref{ex-eq2} and \eqref{ex-eq3}, the function $\varphi(r)$ defined by \eqref{ex-eq5} takes the form
\begin{eqnarray*}
\varphi(r)&=& 2-\frac{1-(\alpha+1)r^{\alpha}+\alpha r^{\alpha+1}}{(1-r)^2}   \\
&& \hspace{1cm} - \frac{\sqrt{M^2-\alpha}}{(1-r)^2} \left [2 r-r^2 + \frac{1-(\alpha+2)r^{\alpha+1}+(\alpha+1)r^{\alpha+2}}{(1-r)^2} \right .\\
&& \hspace{5cm}  \left . -1-2r -r^2 \left ( \frac{1-(\alpha+1)r^{\alpha}+\alpha r^{\alpha+1}}{(1-r)^2}-1\right )\right ]\\
&=& 2-\frac{1-(\alpha+1)r^{\alpha}+\alpha r^{\alpha+1}}{(1-r)^2}\\
&& - \frac{\sqrt{M^2-\alpha}}{(1-r)^4} [
 -(1-r)^2 + 1-r^2-(\alpha+2)r^{\alpha+1}+2(\alpha+1)r^{\alpha+2}- \alpha r^{\alpha+3} ].
\end{eqnarray*}
As $-(\alpha+2)r^{\alpha+1}+2(\alpha+1)r^{\alpha+2}- \alpha r^{\alpha+3}= (1-r)(\alpha r -(\alpha+2)) r^{\alpha+1}$, it follows from elementary calculations that
$$
\varphi(r)= 2-\frac{1-(\alpha+1)r^{\alpha}+\alpha r^{\alpha+1}}{(1-r)^2}
  -  \sqrt{M^2-\alpha}\,\frac{r}{(1-r)^3} [2 + (\alpha r -(\alpha+2)) r^{\alpha} ]
$$
and the claim is proved. To this end, it is easy to verify that there exists an $\varepsilon>0$ such that the function $\varphi(r)$ is continuous for $r \in[0,1-\varepsilon]$, and
$$\lim _{r \rightarrow 0} \varphi(r)=1>0 ~\mbox{ and }~ \lim _{r \rightarrow 1-\varepsilon} \varphi(r)<0.
$$
Therefore, according to the zero-point existence theorem, there exists an $r_1 \in(0,1)$ such that $\varphi(r_1)=0$. This implies that
$\left|F(z_1)-F(z_2)\right| >0$ for two distinct points $z_1, z_2 \in \mathbb{D}_{r_1}$ and hence, $F$ is univalent in $\mathbb{D}_{r_1}$.

By Theorem \ref{thm1}(1), we see that $\left|a_{n, k}\right| \leq \sqrt{M^2-\alpha}\,$ for $n \geq 2$ and $k=0, 1, \ldots , \alpha -1$. Therefore, for any $w\in\partial\mathbb{D}_{r_1}$, we obtain
\begin{eqnarray*}
|F(w)-F(0)|&=&\left|\sum_{k=0}^{\alpha-1} \bar{w}^k A_k(w)\right|
=\left|\sum_{k=0}^{\alpha-1} \bar{w}^k\left(a_{1, k} w+\sum_{n=2}^{\infty} a_{n k} w^n\right)\right| \\
& =&\left|\sum_{k=0}^{\alpha-1} \bar{w}^k a_{1, k} w+\sum_{k=0}^{\alpha-1} \bar{w}^k \sum_{n=2}^{\infty} a_{n, k} w^n\right| \\
& =&\left|a_{1,0} w+\sum_{k=1}^{\alpha-1} a_{1, k}\left| w\right|^2 \overline{w}^{k-1}+\sum_{k=0}^{\alpha-1}\bar{w}^k \sum_{n=2}^{\infty} a_{n, k} w^n \right| \\ & \geq & r_1-\sum_{k=1}^{\alpha-1} r_1^{k+1}-\sqrt{M^2-\alpha}\, \sum_{k=0}^{\alpha-1} \sum_{n=2}^{\infty} r_1^{n+k} \\
& =& r_1-\frac{r_1^2\left(1-r_1^{\alpha-1}\right)}{1-r_1}-\sqrt{M^2-\alpha}\, \frac{(1-r_1^\alpha)r_1^2}{(1-r_1)^2}  =\sigma_1.
\end{eqnarray*}
Hence, $F\left(\mathbb{D}_{r_1}\right)$ contains a schlicht disk $\mathbb{D}_{\sigma_1}$. See Tables \ref{tab1} and \ref{tab2}.

If $|F(z)|\leq 1$ for all $z\in\mathbb{D}$, then, as $F(0)=0$ and $\lambda_F(0)=\big||F_z(0)|-|F_{\overline{z}}(0)|\big|=1$, by Corollary \ref{cor2}, we obtain that $F(z)=\gamma z$ or $F(z)=\gamma \overline{z}$, where $|\gamma|=1$. It follows that $F$ is univalent and maps $\mathbb{D}$ onto $\mathbb{D}$.
\end{proof}

\begin{table}[h!]
\begin{tabular*}{15cm}[]{p{1.3cm}p{1.3cm}p{1.3cm}p{1.3cm}p{1.3cm}p{1.3cm}p{1.3cm}p{1.3cm}p{1.3cm}c}
\hline   $M$ & $1.5$  & $2$ & $2.5$ & $3$ &
$3.5$ & $4$ & $4.5$ & $5$\\
\hline
$r_1$ & 0.2450 & 0.1550 & 0.1120 & 0.0875 & 0.0712 & 0.0598 & 0.0515 & 0.0452 \\
$\sigma_1$ & 0.0133 & 0.0846 & 0.0572 & 0.0410 & 0.0312 & 0.0246 & 0.0199 & 0.0164 \\
\hline
\end{tabular*}

\vspace{6pt}
\caption{The values of $r_1$ and $\sigma_1$ in  Theorem \ref{thm3} for $\alpha=2$. \label{tab1}}
\end{table}

\begin{table}[h!]
\begin{tabular*}{15cm}[]{p{1.3cm}p{1.3cm}p{1.3cm}p{1.3cm}p{1.3cm}p{1.3cm}p{1.3cm}p{1.3cm}p{1.3cm}c}
\hline   $M$  & $2$ & $2.5$ & $3$ &
$3.5$ & $4$ & $4.5$ & $5$ & $5.5$\\
\hline
$r_1$ & 0.1600 & 0.1140 & 0.0880 & 0.0710 & 0.0590 & 0.0505 & 0.0370 & 0.0340\\
$\sigma_1$ & 0.0940 & 0.0600 & 0.0425 & 0.0320 & 0.0252 & 0.0205 & 0.0155 & 0.0120\\
\hline
\end{tabular*}

\vspace{6pt}
\caption{The values of $r_1$ and $\sigma_1$ in  Theorem \ref{thm3} for $\alpha=3$. \label{tab2}}
\end{table}

\begin{theorem}\label{thm4}
Suppose that $M\geq 1$, $\alpha\geq 3$, and $F$ is a polyanalytic function satisfying the conditions of Theorem \ref{thm1}. If $|F(z)|\leq M$ for all $z\in\mathbb{D}$, and $F(0)=\left|J_F(0)\right|-1=0$, then $F$ is univalent in the disk $\mathbb{D}_{r_2}$ and $F\left(\mathbb{D}_{r_2}\right)$ contains a schlicht disk  $\mathbb{D}_{\sigma_2}$, where $\lambda_0(M)$ is defined by \eqref{2-7} and $r_2$ is the least positive root of the equation
\begin{equation}\label{psi_r}
\lambda_0(M)+\sqrt{M^2-1} - \frac{\sqrt{M^2-1}}{(1-r)^3}\Big[r(2-r)(1-r)+ 1+r+ r^{\alpha -1}( (\alpha -2)r- \alpha)\Big] =0
\end{equation}
and
\begin{equation}\label{sgma_2}
 \sigma_2=  r_2\lambda_0(M)- \sqrt{M^2-1}\left( \frac{2r_2^2(1-r_2^{\alpha -2})}{1-r_2}+r_2^{\alpha}+\frac{r_2^2(1-r_2^{\alpha})}{(1-r_2)^2}
  \right).
\end{equation}
In particular, if $|F(z)|\leq 1$ for all $z\in\mathbb{D}$ and $F(0)=\left|J_F(0)\right|-1=0$, then $F$ is univalent and maps $\mathbb{D}$ onto  $\mathbb{D}$.
\end{theorem}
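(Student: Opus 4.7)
The plan is to mirror the structure of the proof of Theorem \ref{thm3}, substituting the stronger normalization $F(0)=|J_F(0)|-1=0$ together with the matching coefficient bounds provided by Theorem \ref{thm1}(2): namely, $\lambda_F(0)\ge \lambda_0(M)$ and $|a_{n,k}|\le \sqrt{M^2-1}$ for every $(n,k)\ne(1,0),(0,1)$.

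For the univalence part, I would fix distinct $z_1,z_2\in \mathbb{D}_r$ with $0<r<1$ and write
\begin{eqnarray*}
F(z_1)-F(z_2)&=& \int_{[z_1,z_2]}\bigl(F_z(0)\,dz+F_{\bar z}(0)\,d\bar z\bigr)\\
&& +\int_{[z_1,z_2]}\bigl((F_z(z)-F_z(0))\,dz+(F_{\bar z}(z)-F_{\bar z}(0))\,d\bar z\bigr).
\end{eqnarray*}
The first integral equals $a_{1,0}(z_1-z_2)+a_{0,1}\overline{(z_1-z_2)}$, and the triangle inequality $|u+v|\ge \bigl||u|-|v|\bigr|$ bounds its modulus below by $\lambda_F(0)|z_1-z_2|\ge \lambda_0(M)|z_1-z_2|$. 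For the second integral, use the expansions (\ref{liu1})--(\ref{liu2}), bound every Taylor coefficient by $\sqrt{M^2-1}$, and sum the resulting geometric-type series to get
$$|F(z_1)-F(z_2)|\ge |z_1-z_2|\Bigl\{\lambda_0(M)-\sqrt{M^2-1}\Bigl[\tfrac{2r-r^2}{(1-r)^2}+\tfrac{r(1-r^{\alpha-1})}{(1-r)^3}+\tfrac{r}{1-r}+\tfrac{1}{1-r}\sum_{k=2}^{\alpha-1}k r^{k-1}\Bigr]\Bigr\}.$$
The derivative analogues of (\ref{ex-eq1})--(\ref{ex-eq3}) then collapse the bracket into the left-hand side of (\ref{psi_r}); since the resulting $\psi(r)$ satisfies $\psi(0)=\lambda_0(M)>0$ and $\psi(r)\to-\infty$ as $r\to 1^-$, the intermediate value theorem yields the least positive root $r_2$, so $F$ is univalent on $\mathbb{D}_{r_2}$.

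For the covering radius, I would take $|w|=r_2$ and decompose
$$F(w)=(a_{1,0}w+a_{0,1}\bar w)+\sum_{k=2}^{\alpha-1}a_{0,k}\bar w^k+\sum_{k=1}^{\alpha-1}a_{1,k}|w|^2\bar w^{k-1}+\sum_{k=0}^{\alpha-1}\bar w^k\sum_{n=2}^{\infty}a_{n,k}w^n.$$
The linear part has modulus at least $\lambda_0(M)r_2$, and the three tail sums are bounded termwise by $\sqrt{M^2-1}$ times $\sum_{k=2}^{\alpha-1}r_2^k$, $\sum_{k=1}^{\alpha-1}r_2^{k+1}$, and $\sum_{k=0}^{\alpha-1}\sum_{n=2}^{\infty}r_2^{n+k}$, respectively; their closed-form sums combine to reproduce exactly $\sigma_2$ in (\ref{sgma_2}), giving $\mathbb{D}_{\sigma_2}\subset F(\mathbb{D}_{r_2})$. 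When $M=1$, $\sqrt{M^2-1}=0$ forces every non-linear coefficient to vanish, and Corollary \ref{cor2}(i) identifies $F$ with $\gamma z$ or $\gamma\bar z$ for some $|\gamma|=1$, each a bijection of $\mathbb{D}$ onto itself.

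The main obstacle will be the algebraic bookkeeping required to reduce the raw lower bounds to the advertised closed forms (\ref{psi_r}) and (\ref{sgma_2}); the hypothesis $\alpha\ge 3$ is what guarantees that the sums indexed by $k\in\{2,\ldots,\alpha-1\}$ are non-vacuous, so that the $r^{\alpha-1}$ and $r^\alpha$ terms in (\ref{psi_r}) and (\ref{sgma_2}) genuinely appear; and the whole argument is uniform in the sign of $J_F(0)$, since only the quantity $\lambda_F(0)=\bigl||a_{1,0}|-|a_{0,1}|\bigr|$ enters the estimates.
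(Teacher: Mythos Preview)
Your proposal is correct and follows essentially the same approach as the paper: the same line-integral decomposition into the linear part $a_{1,0}(z_1-z_2)+a_{0,1}\overline{(z_1-z_2)}$ (bounded below via $\lambda_F(0)\ge\lambda_0(M)$) and the remainder (bounded termwise by $\sqrt{M^2-1}$), the same algebraic reduction of the resulting sums to the closed form \eqref{psi_r}, the same covering estimate leading to \eqref{sgma_2}, and the same appeal to Corollary~\ref{cor2} in the case $M=1$. The paper organizes the remainder estimate into five pieces $J_1,\ldots,J_5$ (separating the $A_0'$, $A_1$, and higher-$k$ contributions), but the content is identical to yours.
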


\begin{proof}
From (\ref{liu1}) and (\ref{liu2}), we have that $F_z(0)=A_0^{\prime }(0)=a_{1,0}$ and $F_{\bar{z}}(0)=A_1(0)=a_{0,1}$. Then for any $z_1 \neq z_2$, where $z_1, z_2 \in \mathbb{D}_r$ and $r \in(0,1)$ is a constant,  we have
$$\begin{aligned}
&|F(z_1)-F(z_2)|=\left|\int_{[z_1, z_2]} F_z d z+F_{\bar{z}} d \bar{z}\right| \\
& \geq\left|\int_{\left[z_1, z_2\right]} F_z(0) d z+F_{\bar{z}}(0) d \bar{z}\right|-\left|\int_{\left[z_1, z_2\right]}\left(F_z(z)-F_z(0)\right) d z+\left(F_{\bar{z}}(z)-F_{\bar{z}}(0)\right) d \bar{z}\right|\\
&=\left|\int_{\left[z_1, z_2\right]} F_z(0) d z+F_{\bar{z}}(0) d \bar{z}\right|\\&-\left|\int_{\left[z_1, z_2\right]}\left(\sum_{k=0}^{\alpha-1} \bar{z}^k A_k^{\prime}(z)-A_0^{\prime }(0)\right) d z+\left(\sum_{k=1}^{\alpha-1} k \bar{z}^{k-1} A_k(z)-A_1(0)\right) d \bar{z}\right|
 \\
& \geq J_1-J_2-J_3-J_4-J_5,
\end{aligned}$$
where 
\begin{eqnarray*}
J_1 & :=&\left|\int_{\left[z_1, z_2\right]} F_z(0) d z+F_{\bar{z}}(0) d \bar{z}\right|,\quad J_2  :=\left|\int_{\left[z_1, z_2\right]}\left(A_0^{\prime}(z)-A_0^{\prime}(0)\right) d z\right|, \\
J_3 & :=&\left|\int_{\left[z_1, z_2\right]}\left(A_1(z)-A_1(0)\right) d z\right|, \quad J_4  :=\left|\int_{\left[z_1, z_2\right]} \sum_{k=1}^{\alpha-1} \bar{z}^k A_k^{\prime}(z) d z\right|,~\mbox{ and} \\
J_5 & :=&\left|\int_{\left[z_1, z_2\right]} \sum_{k=2}^{\alpha-1} k\bar{z}^{k-1} A_k(z) d \bar{z}\right| .
\end{eqnarray*}

At first, by Theorem \ref{thm1}(2), we find that $M\geq 1$, and
$$
\lambda_F(0) \geq \lambda_0(M)=\frac{\sqrt{2}}{\sqrt{M^2-1}\,+\sqrt{M^2+1}},
$$
so that,
$$
J_1 \geq\int_{\left[z_1, z_2\right]} \big|\left|a_{1,0}\right|-\left|a_{0,1}\right|\big| \left|d z \right|=\int_{\left[z_1, z_2\right]} \lambda_F(0)|d z|
\geq \lambda_0(M)\left|z_1-z_2\right|.$$

Next, by Theorem \ref{thm1}(2), we see that $\left|a_{n, k}\right| \leq \sqrt{M^2-1}$ for each $(n, k) \neq(1,0),(0,1)$, and thus, in order to estimate $J_p$ for $p=2 \text{ to }5$, we first compute
\begin{eqnarray*}
 \left|A_0^{\prime}(z)-A_0^{\prime}(0)\right| &\leq& \sum_{n=2}^{\infty} n\left|a_{n,0}\right| r^{n-1}
\leq  \sqrt{M^2-1}\, \sum_{n=2}^{\infty} n r^{n-1} =\sqrt{M^2-1}\, \frac{2 r-r^2}{(1-r)^2} ,\\
  \left|A_1(z)-A_1(0)\right| &\leq& \sum_{n=1}^{\infty} \left|a_{n,1}\right| r^n  \leq  \sqrt{M^2-1}\, \frac{r}{1-r} ,\\
 \left|\sum_{k=1}^{\alpha-1} \bar{z}^k A_k^{\prime}(z)\right|
&\leq &  \sum_{k=1}^{\alpha-1}|\bar{z}|^k \sum_{n=1}^{\infty} n\left|a_{n, k}\right||z|^{n-1}
\leq \sqrt{M^2-1}\, \sum_{k=1}^{\alpha-1} \frac{r^k}{(1-r)^2}, ~\mbox{ and }\\
 \left| \sum_{k=1}^{\alpha-1} k\bar{z}^{k-1} A_k(z)\right|
&\leq&  \sum_{k=2}^{\alpha-1}k|\bar{z}|^{k-1} \sum_{n=0}^{\infty} \left|a_{n, k}\right||z|^n \leq
\sqrt{M^2-1}\, \sum_{k=2}^{\alpha-1}  \frac{kr^{k-1}}{1-r}.
\end{eqnarray*}

Using these estimates, one can easily see that
$$\left|F\left(z_1\right)-F\left(z_2\right)\right| \geq J_1-\sum_{p=2}^{5}J_p \geq \left|z_1-z_2\right| \psi(r),$$
where $\psi(r)$ is defined by
\begin{equation}\label{ex-eq4}
\psi(r)=\lambda_0(M)-\sqrt{M^2-1}\left(\frac{2 r-r^2}{(1-r)^2}+\frac{r}{1-r}+\sum_{k=1}^{\alpha-1} \frac{r^k}{(1-r)^2}+\sum_{k=2}^{\alpha-1} \frac{k r^{k-1}}{1-r}\right),
\end{equation}

In order to show that \eqref{psi_r} is equivalent to $\psi(r)=0$,  by \eqref{ex-eq1} and \eqref{ex-eq2},  we simplify the bracketed term in \eqref{ex-eq4}
as
\begin{eqnarray*}
B&=& \frac{2 r-r^2}{(1-r)^2}+\frac{r}{1-r}+\frac{1}{(1-r)^2}\left ( \frac{r(1-r^{\alpha -1})}{1-r} \right )
+\frac{1}{1-r}\left ( \frac{1-\alpha r^{\alpha -1}+(\alpha -1) r^{\alpha}}{(1-r)^2} -1 \right )\\
&=& \frac{2 r-r^2}{(1-r)^2}-1 +\frac{1}{(1-r)^3}[ r(1-r^{\alpha -1})+ 1-\alpha r^{\alpha-1}+(\alpha -1) r^{\alpha} ]\\
&=& \frac{2 r-r^2}{(1-r)^2}-1 +\frac{1}{(1-r)^3}[1+r+ r^{\alpha -1}( (\alpha -2)r- \alpha)]
\end{eqnarray*}
which shows that
$$\psi(r)= \lambda_0(M) +\sqrt{M^2-1} -  \frac{\sqrt{M^2-1}}{(1-r)^3}[r(2-r)(1-r)+ 1+r+ r^{\alpha -1}( (\alpha -2)r- \alpha)]
$$
and thus,  $\psi(r)=0$ is same as \eqref{psi_r}. Furthermore, the function $\psi(r)$ is strictly decreasing for $r \in(0,1)$ and $M>1$ ,
$$
\lim _{r \rightarrow 0+} \psi(r)=\lambda_0(M)=\frac{\sqrt{2}}{\sqrt{M^2-1}\,+\sqrt{M^2+1}}>0 \quad \text { and } \quad \lim _{r \rightarrow 1^{-}} \psi(r)=-\infty.
$$

Hence there exists a unique $r_2\in(0,1)$ satisfying $\psi(r_2)=0$. This implies that $F$ is univalent in $\mathbb{D}_{r_2}$.
Next, we consider
\begin{eqnarray*}
\sum_{k=0}^{\alpha-1} \bar{w}^k A_k(w)
&=&\sum_{k=0}^{\alpha-1} \bar{w}^k\left(a_{0, k}+a_{1, k} w+\sum_{n=2}^{\infty} a_{n, k} w^n\right)\\
& =& a_{0,0}+a_{0,1} \bar{w}+\sum_{k=2}^{\alpha-1} a_{0, k}\bar{w}^k +a_{1,0} w+\sum_{k=1}^{\alpha-1} a_{1, k} w\bar{w}^k +\sum_{k=0}^{\alpha-1} \bar{w}^k \sum_{n=2}^{\infty} a_{n, k} w^n
\end{eqnarray*}
so that for any $w\in\partial\mathbb{D}_{r_2}$, we obtain (since $a_{0,0}=0$)
\begin{eqnarray*}
|F(w)-F(0)| & \geqslant &\left|a_{0,1} \bar{w}+a_{1,0} w\right|-\left|\sum_{k=2}^{\alpha-1}a_{0, k} \bar{w}^k \right|-\left|\sum_{k=1}^{\alpha-1} a_{1, k} w\bar{w}^k \right|-\left|\sum_{k=0}^{\alpha-1} \bar{z}^k \sum_{n=2}^{\infty} a_{n, k} w^n\right|\\
&\geqslant &\lambda_F(0) r_2-\sqrt{M^2-1} \left (\sum_{k=2}^{\alpha-1} r_2^k -\sum_{k=1}^{\alpha-1} r_2^{k+1}-\sum_{k=0}^{\alpha-1} \sum_{n=2}^{\infty}r_2^{n+k} \right ) \\
&\geq & r_2\left[\lambda_0(M)- \sqrt{M^2-1}\left(\sum_{k=2}^{\alpha-1} r_2^{k-1}+\sum_{k=1}^{\alpha-1} r_2^k+\sum_{k=0}^{\alpha-1} \frac{r_2^{1+k}}{1-r_2}\right)\right] =:\sigma_2.
\end{eqnarray*}

We also have (as $\psi(r_2)=0$)
\begin{eqnarray*}
\sigma_2 
& > &r_2\left[\lambda_0(M)-\sqrt{M^2-1}\left(\frac{2 r_2-r_2^2}{(1-r_2)^2}+\frac{r_2}{1-r_2}+\sum_{k=1}^{\alpha-1} \frac{r_2^k}{(1-r_2)^2}+\sum_{k=2}^{\alpha-1} \frac{k r_2^{k-1}}{1-r_2}\right)\right]\\
& = & r_2\psi(r_2)=0
\end{eqnarray*}
and it is a simple exercise to see that (for $\alpha \ge 3$)
\begin{eqnarray*}
\sum_{k=2}^{\alpha-1} r^{k}+\sum_{k=1}^{\alpha-1} r^{k+1}+\sum_{k=0}^{\alpha-1} \frac{r^{2+k}}{1-r}
&=&2r^2\sum_{k=0}^{\alpha-3} r^{k} +r^{\alpha}+ \frac{r^2}{1-r}\sum_{k=0}^{\alpha-1} r^{k}\\
&=& \frac{2r^2(1-r^{\alpha -2})}{1-r}+r^{\alpha}+\frac{r^2(1-r^{\alpha})}{(1-r)^2}
\end{eqnarray*}
which gives  $\sigma_2$ given by \eqref{sgma_2}.
Hence, $F\left(\mathbb{D}_{r_2}\right)$ contains a schlicht disk $\mathbb{D}_{\sigma_2}.$

If $|F(z)|\leq 1$ for all $z\in\mathbb{D}$ and $F(0)=\left|J_F(0)\right|-1=0$, then by Corollary \ref{cor2}, $F$ is univalent and maps $\mathbb{D}$ onto  $\mathbb{D}.$
\end{proof}

For $\alpha=2$ or 3, we can obtain the following corollaries from Theorem \ref{thm4}.

\begin{corollary}\label{cor3}
Suppose that $M\geq 1$, and let $F(z)=A_0(z)+\bar{z} A_1(z)$ be a polyanalytic function of order $2$ on $\mathbb{D}$ satisfying the conditions of Theorem \ref{thm1}. If $|F(z)|\leq M$ for all $z\in\mathbb{D}$, and $F(0)=\left|J_F(0)\right|-1=0$, then $F$ is univalent in the disk $\mathbb{D}_{r_2^{\prime}}$ and $F\left(\mathbb{D}_{r_2^{\prime}}\right)$ contains a schlicht disk  $\mathbb{D}_{\sigma_2^{\prime}}$, where $\lambda_0(M)$ is defined by \eqref{2-7} and $$r_2^{\prime}=1-\sqrt{\frac{2\sqrt{M^2-1}}{\lambda_0(M)+2\sqrt{M^2-1}}}
$$
and
$$\sigma_2^{\prime}=\lambda_0(M) r_2^{\prime}-\sqrt{M^2-1} \frac{2 {r_2^{\prime}}^2}{1-r_2^{\prime}}.
$$
In particular, if $|F(z)|\leq 1$ for all $z\in\mathbb{D}$ and $F(0)=\left|J_F(0)\right|-1=0$, then $F$ is univalent and maps $\mathbb{D}$ onto  $\mathbb{D}$.
\end{corollary}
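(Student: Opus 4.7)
The approach is to specialize the proof of Theorem~\ref{thm4} to $\alpha = 2$, where several sums either collapse to a single term or become empty, yielding a closed-form univalence radius. Since $F(z) = A_0(z) + \bar{z}A_1(z)$, one has $F_z = A_0'(z) + \bar{z}A_1'(z)$ and $F_{\bar{z}} = A_1(z)$, so in the notation of the proof of Theorem~\ref{thm4} the quantity $J_5$ is identically zero and $J_4$ reduces to the single term $k=1$.

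First I would mimic the splitting $|F(z_1) - F(z_2)| \geq J_1 - J_2 - J_3 - J_4$ along the segment $[z_1,z_2]$, applying Theorem~\ref{thm1}(2) to obtain $J_1 \geq \lambda_0(M)\,|z_1-z_2|$ and the coefficient bound $|a_{n,k}| \leq \sqrt{M^2-1}$ for the tails in $J_2, J_3, J_4$. The resulting lower bound has the form $|z_1 - z_2|\,\psi(r)$ with
\begin{equation*}
\psi(r) = \lambda_0(M) - \sqrt{M^2-1}\left( \frac{2r - r^2}{(1-r)^2} + \frac{r}{1-r} + \frac{r}{(1-r)^2} \right).
\end{equation*}
Putting these three fractions over the common denominator $(1-r)^2$ simplifies the bracketed quantity to $\frac{2r(2-r)}{(1-r)^2}$, so $\psi(r) = 0$ becomes $(\lambda_0(M) + 2\sqrt{M^2-1})(1-r)^2 = 2\sqrt{M^2-1}$. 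Taking the positive root yields exactly the stated $r_2'$; the usual sign check at $r=0$ and at $r=1^-$ confirms that $r_2'$ is the least positive root and that $\psi(r)>0$ on $(0, r_2')$, giving univalence in $\mathbb{D}_{r_2'}$.

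For the schlicht disk I would expand, for $w \in \partial\mathbb{D}_{r_2'}$,
\begin{equation*}
F(w) - F(0) = \bigl(a_{1,0} w + a_{0,1}\bar{w}\bigr) + a_{1,1} |w|^2 + \sum_{n \geq 2} \bigl( a_{n,0} w^n + a_{n,1}\, \bar{w} w^n \bigr),
\end{equation*}
bound the linear part from below by $\lambda_F(0)\, r_2' \geq \lambda_0(M)\, r_2'$ via Theorem~\ref{thm1}(2), and bound each of the remaining terms above using $|a_{n,k}| \leq \sqrt{M^2-1}$. Summing the geometric tails, the combined subtraction equals $\sqrt{M^2-1}\bigl(r_2'^2 + \frac{r_2'^2}{1-r_2'} + \frac{r_2'^3}{1-r_2'}\bigr)$, which collapses to $\sqrt{M^2-1}\cdot\frac{2 r_2'^2}{1-r_2'}$, producing the stated $\sigma_2'$. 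The final assertion for $M = 1$ follows immediately from Corollary~\ref{cor2}(i).

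There is no genuine obstacle beyond careful bookkeeping; the payoff of $\alpha = 2$ is precisely that the transcendental-looking equation $\psi(r) = 0$ becomes a quadratic in $1-r$ and therefore admits the explicit square-root formula displayed in the corollary, which is the only place the argument departs in substance from the general one in Theorem~\ref{thm4}.
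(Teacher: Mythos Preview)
Your proposal is correct and follows essentially the same route as the paper's own proof: specialize the $\psi(r)$ of Theorem~\ref{thm4} to $\alpha=2$, simplify the bracket to $\dfrac{2r(2-r)}{(1-r)^2}$, solve the resulting quadratic in $1-r$ for the explicit $r_2'$, and then estimate $|F(w)-F(0)|$ on $\partial\mathbb{D}_{r_2'}$ via the same linear/tail split and coefficient bound to obtain $\sigma_2'$. The only cosmetic difference is that the paper keeps $a_{1,1}|w|^2$ inside the sum $\bar{w}\sum_{n\ge 1}a_{n,1}w^n$ rather than splitting it off as you do, but the arithmetic and outcome are identical.
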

\begin{proof} The number $r_2^{\prime}$ can be easily obtained from the proof of Theorem \ref{thm4} as the least positive root of the following equation:
$$ \lambda_0(M)-\sqrt{M^2-1}\, \frac{2r(2-r)}{(1-r)^2}=0.
$$
Solving this equation for $r$ gives the value $r_2^{\prime}$.

Next, we prove the covering radius $\sigma_2^{\prime}$.
For any $w\in\partial\mathbb{D}_{r_2^{\prime}}$, we obtain
$$
\begin{aligned}
|F(w)-F(0)| & =\left|A_0(w)+\bar{w} A_1(w)\right| \\
& =\left|\sum_{n=0}^{\infty} a_{n, 0} w^n+\bar{w} \sum_{n=0}^{\infty} a_{n, 1} w^n\right| \\
& =\left|a_{0,0}+a_{1,0} w+a_{0,1} \bar{w}+\sum_{n=2}^{\infty} a_{n, 0} w^n+\bar{w} \sum_{n=1}^{\infty} a_{n, 1} w^n\right| \\
& \geqslant\left|a_{1,0} w+a_{0,1} \bar{w}\right|-\left|\sum_{n=2}^{\infty} a_{n, 0} w^n\right|-\left|\sum_{n=1}^{\infty} a_{n, 1} \bar{w} w^n\right| \\
& \geqslant \lambda_F(0) r_2^{\prime}-\sqrt{M^2-1} \sum_{n=2}^{\infty} {r_2^{\prime}} ^n-\sqrt{M^2-1} \sum_{n=1}^{\infty} {r_2^{\prime}} ^{n+1} \\
& \geqslant \lambda_0(M) r_2^{\prime}-\sqrt{M^2-1} \frac{2 {r_2^{\prime}} ^2}{1-r_2^{\prime}}:=\sigma_2^{\prime}.
\end{aligned}
$$
Hence, $F\left(\mathbb{D}_{r_2^{\prime}}\right)$ contains a schlicht disk $\mathbb{D}_{\sigma_2^{\prime}}.$ See Table \ref{tab3}.
\end{proof}

\begin{table}[h!]
\begin{tabular*}{15cm}[]{p{1.3cm}p{1.3cm}p{1.3cm}p{1.3cm}p{1.3cm}p{1.3cm}p{1.3cm}p{1.3cm}p{1.3cm}c}
\hline   $M$ & $1.5$  & $2$ & $2.5$ & $3$ &
$3.5$ & $4$ & $4.5$ & $5$\\
\hline
$r_2^{\prime}$ & 0.4562 & 0.3028 & 0.2285 & 0.1872 & 0.1596 & 0.1399 & 0.1251 & 0.1135 \\
$\sigma_2^{\prime}$ & 0.2078 & 0.1095 & 0.0689 & 0.0486 & 0.0363 & 0.0284 & 0.0228 & 0.0189 \\
\hline
\end{tabular*}

\vspace{6pt}
\caption{The values of $r_2^{\prime}$ and $\sigma_2^{\prime}$ in  Corollary \ref{cor3} for $\alpha=2$.\label{tab3}}
\end{table}

\begin{corollary}\label{cor4}
Suppose that $M\geq 1$, and let $F(z)=A_0(z)+\bar{z} A_1(z)+\bar{z}^2 A_2(z)$ be a polyanalytic function of order $3$ on $\mathbb{D}$ satisfying the conditions of Theorem \ref{thm1}. If $|F(z)|\leq M$ for all $z\in\mathbb{D}$, and $F(0)=\left|J_F(0)\right|-1=0$, then $F$ is univalent in the disk $\mathbb{D}_{r_2^*}$ and $F\left(\mathbb{D}_{r_2^*}\right)$ contains a schlicht disk  $\mathbb{D}_{\sigma_2^*}$, where $\lambda_0(M)$ is defined by \eqref{2-7} and
$$r_2^*=1-\sqrt{\frac{3\sqrt{M^2-1}}{\lambda_0(M)+3\sqrt{M^2-1}}}
$$
and
$$\sigma_2^*=\lambda_0(M) r_2^*-\sqrt{M^2-1} \frac{3{r_2^*}^2}{1-r_2^*}.$$
In particular, if $|F(z)|\leq 1$ for all $z\in\mathbb{D}$ and $F(0)=\left|J_F(0)\right|-1=0$, then $F$ is univalent and maps $\mathbb{D}$ onto  $\mathbb{D}$.
\end{corollary}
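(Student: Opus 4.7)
The plan is to specialize the argument of Theorem \ref{thm4} to the case $\alpha=3$, where the sums collapse to a handful of explicit terms, so that the univalence equation becomes a quadratic in $s=1-r$ with a closed-form solution. Concretely, I would write
\begin{eqnarray*}
F_z(z) &=& A_0'(z)+\bar z\,A_1'(z)+\bar z^{\,2}A_2'(z),\\
F_{\bar z}(z) &=& A_1(z)+2\bar z\,A_2(z),
\end{eqnarray*}
and start from the standard decomposition
$$
F(z_1)-F(z_2)=\int_{[z_1,z_2]}\bigl(F_z(0)\,dz+F_{\bar z}(0)\,d\bar z\bigr)+\int_{[z_1,z_2]}\bigl[(F_z-F_z(0))\,dz+(F_{\bar z}-F_{\bar z}(0))\,d\bar z\bigr].
$$
The first integral is bounded below by $\lambda_F(0)|z_1-z_2|\ge \lambda_0(M)|z_1-z_2|$ via Theorem \ref{thm1}(2).

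The second integral splits naturally into five pieces coming from $A_0'(z)-a_{1,0}$, $\bar z A_1'(z)$, $\bar z^{\,2}A_2'(z)$, $A_1(z)-a_{0,1}$, and $2\bar z A_2(z)$. Each coefficient appearing in these sums is indexed by $(n,k)\neq(1,0),(0,1)$, so Theorem \ref{thm1}(2) gives $|a_{n,k}|\le\sqrt{M^2-1}$. A direct computation with geometric-type series yields the bounds
$\sqrt{M^2-1}\,\tfrac{2r-r^2}{(1-r)^2}$, $\sqrt{M^2-1}\,\tfrac{r}{(1-r)^2}$, $\sqrt{M^2-1}\,\tfrac{r^2}{(1-r)^2}$, $\sqrt{M^2-1}\,\tfrac{r}{1-r}$, and $\sqrt{M^2-1}\,\tfrac{2r}{1-r}$ respectively, whose sum telescopes into $\sqrt{M^2-1}\cdot\tfrac{3r(2-r)}{(1-r)^2}$. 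Hence
$$
|F(z_1)-F(z_2)|\ge |z_1-z_2|\left[\lambda_0(M)-\sqrt{M^2-1}\,\frac{3r(2-r)}{(1-r)^2}\right],
$$
and univalence on $\mathbb{D}_r$ holds whenever the bracket is nonnegative. Substituting $s=1-r$ gives $r(2-r)=1-s^2$, turning the equation into $\lambda_0(M)s^2=3\sqrt{M^2-1}(1-s^2)$, i.e.\ $s^2=\tfrac{3\sqrt{M^2-1}}{\lambda_0(M)+3\sqrt{M^2-1}}$, which yields the stated $r_2^{*}$.

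For the schlicht disk, I would take $|w|=r_2^{*}$ and isolate the first-order part of $F(w)-F(0)$:
$$
F(w)-F(0)=a_{1,0}w+a_{0,1}\bar w+\bigl(A_0(w)-a_{1,0}w\bigr)+\bar w\bigl(A_1(w)-a_{0,1}\bigr)+\bar w^{\,2}A_2(w).
$$
The triangle inequality then gives $|a_{1,0}w+a_{0,1}\bar w|\ge\lambda_F(0)r_2^{*}\ge\lambda_0(M)r_2^{*}$, while each of the three remaining pieces is bounded by $\sqrt{M^2-1}\,\tfrac{{r_2^{*}}^{\,2}}{1-r_2^{*}}$ using the same coefficient estimate and summing a geometric series. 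Adding these three contributions gives the total correction $3\sqrt{M^2-1}\,\tfrac{{r_2^{*}}^{\,2}}{1-r_2^{*}}$, which produces exactly the stated $\sigma_2^{*}$. Finally, the case $M=1$ forces $\sqrt{M^2-1}=0$, so by Corollary \ref{cor2} the mapping reduces to $F(z)=\alpha z$ or $F(z)=\beta\bar z$ with $|\alpha|=|\beta|=1$, proving the last assertion. The only mildly delicate point is the clean telescoping of the five estimates into the compact expression $3r(2-r)/(1-r)^2$; everything else is bookkeeping along the pattern already set in Corollary \ref{cor3}.
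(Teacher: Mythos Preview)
Your proposal is correct and follows essentially the same route as the paper: the paper's proof simply says that Corollary~\ref{cor4} is the specialization of Theorem~\ref{thm4} to $\alpha=3$, noting that the defining equation $\psi(r)=0$ collapses to $\lambda_0(M)-\sqrt{M^2-1}\,\tfrac{3r(2-r)}{(1-r)^2}=0$. Your five-piece decomposition is just the $\alpha=3$ instance of the $J_1,\ldots,J_5$ splitting in the proof of Theorem~\ref{thm4}, and your substitution $s=1-r$ to solve the quadratic, your covering estimate via the three tail pieces, and your appeal to Corollary~\ref{cor2} for the $M=1$ case all match the paper's computations exactly.
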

\begin{proof}
The proof may be directly obtained from Theorem \ref{thm4}.  Indeed, the number $r_2^*$ is the least positive root of the equation
$$ \lambda_0(M)-\sqrt{M^2-1}\, \frac{3r(2-r)}{(1-r)^2}=0
$$
which follows from Theorem \ref{thm4} for the case $\alpha =3$. See Table \ref{tab4}.
\end{proof}

\begin{table}[h!]
\begin{tabular*}{15cm}[]{p{1.3cm}p{1.3cm}p{1.3cm}p{1.3cm}p{1.3cm}p{1.3cm}p{1.3cm}p{1.3cm}p{1.3cm}c}
\hline   $M$ & $1.5$  & $2$ & $2.5$ & $3$ &
$3.5$ & $4$ & $4.5$ & $5$\\
\hline
$r_2^*$ & 0.3817 & 0.2403 & 0.1732 & 0.1348 & 0.1102 & 0.0935 & 0.0814 & 0.0720 \\
$\sigma_2^*$ & 0.1458 & 0.0693 & 0.0398 & 0.0255 & 0.0178 & 0.0130 & 0.0099 & 0.0078 \\
\hline
\end{tabular*}

\vspace{6pt}
\caption{For $\alpha=3$: The values of $r_2^*$ and $\sigma_2^*$ in  Corollary \ref{cor4}. \label{tab4}}
\end{table}

If we replace the condition $|J_F(0)|-1=0$ with $\lambda_F(0)-1=0$ in Theorem \ref{thm4} and apply the analogous method of the proof of Theorem \ref{thm4}, we may verify the following theorem which we state without proof.

\begin{theorem}\label{thm5} Suppose that $M>1$, $\alpha\geq 3$ and $F$ is a polyanalytic function
satisfying the conditions of Theorem \ref{thm1}, If $|F(z)|\leq M$ for all $z\in\mathbb{D}$, and $F(0)=\lambda_F(0)-1=0$, then $F$ is univalent in the disk $\mathbb{D}_{r_3}$ and $F\left(\mathbb{D}_{r_3}\right)$ contains a schlicht disk $\mathbb{D}_{\sigma_3}$, where $r_3$ is the least positive root of the following equation \eqref{psi_r} with $\lambda_0(M)=1$, i.e.,
\begin{equation*}
1-\sqrt{M^2-1} + \frac{\sqrt{M^2-1}}{(1-r)^3}\Big[r(2-r)(1-r)+ 1+r+ r^{\alpha -1}( (\alpha -2)r- \alpha)\Big] =0,
\end{equation*}
and $\sigma_3$ is same as $\sigma_2$ but with $\lambda_0(M)=1$, i.e.,
\begin{equation*}
 \sigma_3=  r_3- \sqrt{M^2-1}\left( \frac{2r_3^2(1-r_3^{\alpha -2})}{1-r_3}+r_3^{\alpha}+\frac{r_3^2(1-r_3^{\alpha})}{(1-r_3)^2}  \right).
\end{equation*}


In particular, if $|F(z)|\leq 1$ for all $z\in\mathbb{D}$ and $F(0)=\lambda_F(0)-1=0$, then $F$ is univalent and maps $\mathbb{D}$ onto  $\mathbb{D}$.
\end{theorem}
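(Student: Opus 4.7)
\textbf{Proof proposal for Theorem~\ref{thm5}.}
The plan is to run the proof of Theorem~\ref{thm4} essentially verbatim, making two substitutions: replace the appeal to Theorem~\ref{thm1}(2) by Theorem~\ref{thm1}(3), and replace the lower bound $\lambda_F(0)\geq \lambda_0(M)$ by the exact equality $\lambda_F(0)=1$ coming from the hypothesis. Since the coefficient estimates $|a_{n,k}|\leq \sqrt{M^2-1}$ for $(n,k)\neq (1,0),(0,1)$ are identical in parts (2) and (3) of Theorem~\ref{thm1}, every auxiliary integral bound $J_2,J_3,J_4,J_5$ computed in the proof of Theorem~\ref{thm4} carries over unchanged, and only the leading term $J_1$ needs a different (and easier) bound.

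More precisely, for $z_1\neq z_2$ in $\mathbb{D}_r$ with $0<r<1$, I would write
\[ F(z_1)-F(z_2)=\int_{[z_1,z_2]}F_z(0)\,dz+F_{\bar z}(0)\,d\bar z + (\text{higher-order terms}), \]
split the higher-order contribution into the four integrals $J_2$ through $J_5$ exactly as before, and estimate
\[ J_1=\bigl|F_z(0)(z_2-z_1)+F_{\bar z}(0)\,\overline{(z_2-z_1)}\bigr|\geq \lambda_F(0)\,|z_1-z_2|=|z_1-z_2|. \]
The same algebraic simplification used to derive \eqref{psi_r} from \eqref{ex-eq4}, via the identities \eqref{ex-eq1}--\eqref{ex-eq3}, then yields $|F(z_1)-F(z_2)|\geq |z_1-z_2|\,\widetilde{\psi}(r)$, where $\widetilde{\psi}(r)$ is obtained from $\psi(r)$ by setting $\lambda_0(M)=1$; after rearrangement, $\widetilde{\psi}(r)=0$ is precisely the equation in the statement for $r_3$. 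Strict monotonicity of $\widetilde{\psi}$ on $(0,1)$ is inherited from that of $\psi$, and the endpoint values $\widetilde{\psi}(0^+)=1>0$ and $\widetilde{\psi}(1^-)=-\infty$ guarantee a unique least positive root $r_3\in(0,1)$, whence $F$ is univalent in $\mathbb{D}_{r_3}$.

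For the schlicht disk, I would reproduce the argument used to obtain $\sigma_2$: for $w\in\partial\mathbb{D}_{r_3}$, expand $F(w)=\sum_{k=0}^{\alpha-1}\bar w^k A_k(w)$, isolate the linear part $a_{1,0}w+a_{0,1}\bar w$, and bound it below by $\lambda_F(0)\,r_3=r_3$; the remaining terms are absorbed via $|a_{n,k}|\leq\sqrt{M^2-1}$ and the same geometric sums that led to \eqref{sgma_2}. This yields $\sigma_3$ in the stated form, namely $\sigma_2$ evaluated at $r_2=r_3$ under the substitution $\lambda_0(M)\mapsto 1$. Finally, the case $|F(z)|\leq 1$ combined with $F(0)=\lambda_F(0)-1=0$ forces $a_{n,k}=0$ for all $(n,k)\neq(1,0),(0,1)$ by Corollary~\ref{cor2}(ii), so $F(z)=\gamma z$ or $F(z)=\gamma \bar z$ with $|\gamma|=1$, and $F$ maps $\mathbb{D}$ univalently onto $\mathbb{D}$.

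The only non-mechanical point is verifying that $\widetilde{\psi}(0^+)>0$, i.e., that the replacement $\lambda_0(M)\to 1$ does not destroy the sign change needed for the existence of $r_3$; since $\lambda_0(M)<1$ for $M>1$ while $\widetilde{\psi}(0^+)=1$, the existence argument is in fact strengthened rather than obstructed. Apart from this check, no new difficulty arises because every estimate in the proof of Theorem~\ref{thm4} that did not explicitly invoke $\lambda_0(M)$ applies unchanged.
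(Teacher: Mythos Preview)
Your proposal is correct and matches the paper's own treatment exactly: the paper does not write out a separate proof for Theorem~\ref{thm5} but explicitly states that it follows by replacing the condition $|J_F(0)|-1=0$ with $\lambda_F(0)-1=0$ in Theorem~\ref{thm4} and applying the analogous method. Your identification of the two substitutions---using Theorem~\ref{thm1}(3) in place of (2) for the coefficient bounds, and using $\lambda_F(0)=1$ directly in place of the lower bound $\lambda_F(0)\geq\lambda_0(M)$---is precisely what is required, and your check that $\widetilde\psi(0^+)=1>0$ confirms the existence of $r_3$.
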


\section{Landau-type theorems for $\log$-$\alpha$-analytic functions}\label{HLP-sec4}
In this section, we establish three versions of  Landau-type theorems  for certain normalized $\log$-$\alpha$-analytic functions in the unit disk by means of Theorems \ref{thm3}-\ref{thm5} and Lemma \ref{lem1}. In particular, we obtain the accurate value of Bloch constant for certain $\log$-$\alpha$-analytic mappings in $\mathbb{D}$.

\begin{theorem}\label{thm6}
Suppose that $M\geq \sqrt{\alpha}\, (\geq 1)$, and $f(z)=\prod_{k=0}^{\alpha-1}\left(a_k(z)\right)^{\bar{z}^k}$ is a $\log$-$\alpha$-analytic function on $\mathbb{D}$, satisfying $f(0)=1$, where for each $k$ , $a_k(z)$ is $\log$-analytic on $\mathbb{D}$, $a_k(0)=a_k^{\prime}(0)=1$ and $A_k(z):=\log a_k(z)=\sum_{n=0}^{\infty} a_{n, k} z^n$, and all its non-zero coefficients $a_{n_1,k_1}, a_{n_2,k_2}$ satisfy $\left|\arg \frac{a_{n_1, k_1}}{a_{n_2, k_2}}\right| \leq \frac{\pi}{2}$ for each $k_1,k_2\in\{0,\ldots , \alpha-1\}$ and $n_1, n_2\in\mathbb{N}$ with $k_1\neq k_2,\, n_1\neq n_2$. If $|\log f(z)| \leq M$, then $f$ is univalent in the disk $\mathbb{D}_{r_1}$ and $f\left(\mathbb{D}_{r_1}\right)$ contains a schlicht disk $\mathbb{D}\left(w_1, \mu_1\right)$, where $r_1$ and $\sigma_1$ are as in Theorem \ref{thm3} with
$w_1=\cosh \sigma_1$ and $\mu_1=\sinh \sigma_1$.

In particular, if $|\log f(z)|\leq 1$ for all $z\in\mathbb{D}$, then $f$ is univalent in $\mathbb{D}$ and $f\left(\mathbb{D}\right)$ contains a schlicht disk $\mathbb{D}\left(w_1', \mu_1'\right)$, where $w_1'=\cosh 1 \text{ and }\, \mu_1'=\sinh 1$. This result is sharp.
\end{theorem}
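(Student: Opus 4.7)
The plan is to set $F(z):=\log f(z)=\sum_{k=0}^{\alpha-1}\bar z^k A_k(z)$, a polyanalytic function of order $\alpha$, and transfer the conclusions of Theorem \ref{thm3} to $f$ via the exponential map together with Lemma \ref{lem1}. First I would verify the hypotheses of Theorem \ref{thm3}: $f(0)=1$ gives $F(0)=0$; $a_k(0)=1$ gives $A_k(0)=0$; and $A_k'(0)=a_k'(0)/a_k(0)=1$, so each $A_k\in\mathcal{A}$. The argument condition on the Taylor coefficients transfers verbatim, and $|F(z)|=|\log f(z)|\le M$. Theorem \ref{thm3} then supplies univalence of $F$ on $\mathbb{D}_{r_1}$ together with the inclusion $F(\mathbb{D}_{r_1})\supset\mathbb{D}_{\sigma_1}$.

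Next I would promote the univalence of $F$ to univalence of $f=e^F$ on $\mathbb{D}_{r_1}$. Any failure would give distinct $z_1,z_2\in\mathbb{D}_{r_1}$ with $F(z_1)-F(z_2)=2\pi i k$ for some nonzero integer $k$, and in particular $|F(z_1)-F(z_2)|\ge 2\pi$. To rule this out I would re-run the line-integral estimate used in the proof of Theorem \ref{thm3} with all signs made positive, producing the upper bound
\[
|F(w)|\le r_1+\frac{r_1^2(1-r_1^{\alpha-1})}{1-r_1}+\sqrt{M^2-\alpha}\,\frac{r_1^2(1-r_1^\alpha)}{(1-r_1)^2}\qquad(|w|\le r_1),
\]
and use the defining relation $\varphi(r_1)=0$ from Theorem \ref{thm3} to check that the right-hand side stays below $\pi$ for every admissible $M\ge\sqrt{\alpha}$, which excludes all nonzero $k$. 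Once $f$ is known to be univalent on $\mathbb{D}_{r_1}$, the remaining hypotheses of Lemma \ref{lem1} are immediate: $f(0)=1$ is given, $\lambda_f(0)=|e^{F(0)}|\,\lambda_F(0)=1$ follows from $f_z=e^F F_z$, $f_{\bar z}=e^F F_{\bar z}$ and Theorem \ref{thm1}, and $0<\sigma_1\le 1$ is clear from the explicit formula. Lemma \ref{lem1} with $\rho=r_1$ and $\sigma=\sigma_1$ then produces the schlicht disk $\mathbb{D}(\cosh\sigma_1,\sinh\sigma_1)\subset f(\mathbb{D}_{r_1})$.

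For the sharpness clause with $M=1$, the constraint $M\ge\sqrt{\alpha}$ collapses to $\alpha=1$, reducing $f$ to $e^{A_0}$ with $A_0\in\mathcal{A}$ and $|A_0|\le 1$; Schwarz's lemma pins down $A_0(z)=z$, so $f(z)=e^z$ is univalent on $\mathbb{D}$, and the covering disk $\mathbb{D}(\cosh 1,\sinh 1)$ together with its sharpness is inherited directly from the sharpness clause of Lemma \ref{lem1} at $\sigma=1$. The hard part will be the promotion step: confirming $|F|<\pi$ on $\mathbb{D}_{r_1}$ uniformly in $(M,\alpha)$ from the implicit equation $\varphi(r_1)=0$. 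Morally this is clear, since $r_1\sim 1/\sqrt{M^2-\alpha}$ as $M\to\infty$ drives the bound to zero, while the tabulated values of $r_1$ for $M$ close to $\sqrt{\alpha}$ (cf.\ Tables \ref{tab1} and \ref{tab2}) are already far too small for the bound to approach $\pi$; the effort is mostly in packaging this into a clean self-contained estimate.
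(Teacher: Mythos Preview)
Your overall strategy matches the paper's: set $F=\log f$, verify that $F$ satisfies the hypotheses of Theorem~\ref{thm3}, and then invoke Lemma~\ref{lem1} for the covering disk. The covering part and the sharpness clause are handled essentially as in the paper.

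Where you diverge is in the ``promotion step'' for univalence, and there you have manufactured a difficulty that the paper simply does not face. You treat the passage $F\mapsto f$ as $f=e^{F}$ and then worry about the $2\pi i$-periodicity of the exponential, which forces you to prove the auxiliary bound $|F|<\pi$ on $\mathbb{D}_{r_1}$. You yourself flag this as the hard part and only sketch it heuristically (asymptotics plus reference to the numerical tables); as written it is a genuine gap. The paper avoids this entirely by reading the relation in the opposite direction: by definition of a $\log$-$\alpha$-analytic function, $F=\log f$ with the \emph{principal branch} of $\log$, which is a single-valued function. Hence $f(z_1)=f(z_2)$ forces $F(z_1)=\log f(z_1)=\log f(z_2)=F(z_2)$, and the univalence of $F$ on $\mathbb{D}_{r_1}$ (already supplied by Theorem~\ref{thm3}) immediately gives the univalence of $f$ there. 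No bound on $|F|$ is needed at all.

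So your proposal is salvageable, but the cleanest fix is to drop the $|F|<\pi$ argument altogether and replace it with the one-line observation above; this closes the gap and brings your proof in line with the paper's.
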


\begin{proof}
Set $F(z)=\log f(z)$. Then $F$ is a polyanalytic function of order $\alpha$ on the unit disk. Thus $F(z)=\sum_{k=0}^{\alpha-1} \bar{z}^k A_k(z)=\sum_{k=0}^{\alpha-1} \bar{z}^k \sum_{n=0}^{\infty} a_{n, k} z^n$, $F(0)=0, A_k(0)=0, A_k^{\prime}(0)=1,|F(z)| \leq M$, and  therefore, $F$ satisfies the conditions of Theorem \ref{thm3}.

We first prove the univalence of $f(z)$ in $\mathbb{D}_{r_1}$. Consider,  for any $z_1, z_2 \in \mathbb{D}_{r}\left(0<r<r_1,z_1 \neq z_2\right)$. Then it follows from the proof of Theorem \ref{thm3} that
$$\begin{aligned}
\left|\log f\left(z_1\right)-\log f\left(z_2\right)\right|  &=\left|F\left(z_1\right)-F\left(z_2\right)\right|=\left|\int_{\left[z_1, z_2\right]} F_z(z) d z+F_{\bar{z}}(z) d \bar{z}\right| \\
& \geqslant\left|z_2-z_1\right| \varphi(r),
\end{aligned}
$$
where $\varphi(r)$ is given by \eqref{ex-eq5}.
Thus $\log f\left(z_1\right) \neq \log f\left(z_2\right)$, which implies the univalency of $f$ in the disk $\mathbb{D}_{r_1}$.

Next, we consider any $z^{\prime}$ with $\left|z^{\prime}\right|=r_1$. As in the proof of Theorem \ref{thm3}, we get that
$$\begin{aligned}
\left|\log f\left(z^{\prime}\right)\right|
&=\left|F\left(z^{\prime}\right)\right|=\left|\sum_{k=0}^{\alpha-1} \bar{z}^{\prime k} A_k\left(z^{\prime}\right)\right|
& \geq r_1-\frac{r_1^2-r_1^{\alpha+1}}{1-r_1}-\sqrt{M^2-\alpha}\, \sum_{k=0}^{\alpha-1} \frac{r_1^{k+2}}{1-r_1}=\sigma_1.
\end{aligned}$$

By Lemma \ref{lem1}, we obtain that the range $f\left(\mathbb{D}_{r_1}\right)$ contains a schlicht disk $\mathbb{D}\left(w_1, \mu_1\right)$, where $w_1=\cosh \sigma_1 \text{ and } \mu_1=\sinh \sigma_1$.

If $|\log f(z)|\leq 1$ for all $z\in\mathbb{D}$, then it follows from Theorem \ref{thm3} and the above proof that $f$ is univalent in $\mathbb{D}$ and $f\left(\mathbb{D}\right)$ contains a schlicht disk $\mathbb{D}\left(w_1', \mu_1'\right)$, where $w_1'=\cosh 1 \text{ and } \mu_1'=\sinh 1$.

It is obvious that $r_1'=1$ is sharp. Now, we prove the sharpness of $\mu_1'$. To this end, we consider the $\log$-$p$-analytic function $f_1(z)=e^{F_1(z)}$. Here, $F_1(z)=\gamma z$ or $F_1(z)=\gamma \overline{z}$, where $|\gamma|=1$. It is easy to verify that $f_1(z)$ satisfies the hypothesis of Theorem \ref{thm6}, and thus, we have that $f_1(z)$ is univalent in $\mathbb{D}$, and the range $f_1\left( \mathbb{D}\right) $ contains a schlicht disk $U\left( w_1',r_1' \right)$.

Because the univalent radius $r_{1}'=1$ is sharp, the sharpness of the radius $\mu_1'=\sinh 1$ follows from Lemma \ref{lem1} and the fact that $\sigma_1'=1$. The proof of the theorem is complete.
\end{proof}

With the aid of Theorems \ref{thm4}, \ref{thm5} and Lemma \ref{lem1}, if we apply the analogous method as in the proof of Theorem \ref{thm6}, we may establish the following two new forms of the Landau-type theorems for certain $\log$-$\alpha$-analytic mappings. 

\begin{theorem}\label{thm7} Suppose that $M\geq 1$, $\alpha\geq 3$,  and $f(z)=\prod_{k=0}^{\alpha-1}\left(a_k(z)\right)^{\bar{z}^k}$ is a $\log$-$\alpha$-analytic function on $\mathbb{D}$, satisfying $f(0)=\left|J_f(0)\right|=1$, where for each $k$, $a_k(z)$ is $\log$-analytic on $\mathbb{D}$, and $A_k(z):=\log a_k(z)=\sum_{n=0}^{\infty} a_{n, k} z^n$, and all its non-zero coefficients $a_{n_1,k_1}, a_{n,k_2}$ satisfy $\left|\arg \frac{a_{n_1, k_1}}{a_{n_2, k_2}}\right| \leq \frac{\pi}{2}$ for each $k_1,k_2\in\{0,\ldots , \alpha-1\}$ and $n_1, n_2\in\mathbb{N}$ with $k_1\neq k_2,\, n_1\neq n_2$. If $|\log f(z)| \leq M$, then $f$ is univalent in the disk $\mathbb{D}_{r_2}$ and $f\left(\mathbb{D}_{r_2}\right)$ contains a schlicht disk $\mathbb{D}\left(w_2, \mu_2\right)$, where $r_2$ and $\sigma_2$ are as in Theorem  \ref{thm4} with $w_2=\cosh \sigma_2$ and $\mu_2=\sinh \sigma_2$.

In particular, if $|\log f(z)|\leq 1$ for all $z\in\mathbb{D}$ and $f(0)=\left|J_f(0)\right|=1$, then $f$ is univalent in $\mathbb{D}$ and $f\left(\mathbb{D}\right)$ contains a schlicht disk $\mathbb{D}\left(w_1', \mu_1'\right)$, where $w_1'=\cosh 1\text{ and } \mu_1'=\sinh 1$. This result is sharp.
\end{theorem}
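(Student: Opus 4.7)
The plan is to mimic the argument of Theorem \ref{thm6}, but substitute Theorem \ref{thm4} for Theorem \ref{thm3} at the key step and then invoke Lemma \ref{lem1} to pass from a covering property of $F=\log f$ to the desired schlicht disk for $f$. First I would set $F(z)=\log f(z)$ and observe that $F$ is polyanalytic of order $\alpha$ on $\mathbb{D}$ with decomposition $F(z)=\sum_{k=0}^{\alpha-1}\bar z^k A_k(z)$, where each $A_k(z)=\sum_{n\geq 0} a_{n,k}z^n$ is the analytic component coming from $a_k=e^{A_k}$. The hypothesis $f(0)=1$ yields $F(0)=0$, while the Wirtinger chain rule applied to $f=e^F$ gives $f_z=f\,F_z$ and $f_{\bar z}=f\,F_{\bar z}$, hence $J_f=|f|^2 J_F$; evaluating at the origin one gets $|J_F(0)|=|J_f(0)|=1$. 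The coefficient-argument condition and the bound $|F(z)|\leq M$ are given, so $F$ meets every hypothesis of Theorem \ref{thm4}.

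Next, Theorem \ref{thm4} produces a radius $r_2\in(0,1)$ such that $F$ is univalent on $\mathbb{D}_{r_2}$ and $F(\mathbb{D}_{r_2})\supset\mathbb{D}_{\sigma_2}$, with $r_2$ and $\sigma_2$ exactly the numbers defined there. Because ``$\log$'' is taken to be the principal branch, $F$ is single-valued and its range maps injectively under $\exp$; consequently $F(z_1)\neq F(z_2)$ forces $f(z_1)=e^{F(z_1)}\neq e^{F(z_2)}=f(z_2)$, which transfers univalence from $F$ to $f$ on $\mathbb{D}_{r_2}$. Since $0<\sigma_2\leq 1$, Lemma \ref{lem1} applies to the pair $(f,F)$ with $\rho=r_2$ and $\sigma=\sigma_2$, producing the schlicht disk $\mathbb{D}(w_2,\mu_2)=\mathbb{D}(\cosh\sigma_2,\sinh\sigma_2)$ inside $f(\mathbb{D}_{r_2})$.

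For the sharp special case $M=1$, the ``in particular'' clause of Theorem \ref{thm4} (equivalently, Corollary \ref{cor2} applied to $F$) forces $F(z)=\gamma z$ or $F(z)=\gamma\bar z$ with $|\gamma|=1$, so $F$ is a bijection of $\mathbb{D}$ onto $\mathbb{D}$ whose largest univalent radius equals $1$. Lifting through the exponential shows that $f=e^F$ is univalent on $\mathbb{D}$, and Lemma \ref{lem1} with $\sigma=1$ yields the covering disk $\mathbb{D}(\cosh 1,\sinh 1)$. The sharpness clause of Lemma \ref{lem1} (``if $\rho$ is the biggest univalent radius, then $r_0=\sinh\sigma$ is sharp'') then transfers to $f$, and the extremal functions $f_\star(z)=e^{\gamma z}$ and $e^{\gamma\bar z}$ confirm that $\mu_1'=\sinh 1$ cannot be enlarged.

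The main obstacle I anticipate is purely one of bookkeeping: one must verify carefully that the normalization $f(0)=|J_f(0)|=1$ imposed on the log-$\alpha$-analytic object really does translate into the normalization $F(0)=|J_F(0)|-1=0$ required by Theorem \ref{thm4}, which hinges on the identity $J_f=|f|^2J_F$ evaluated at the origin, and that the principal-branch choice of ``$\log$'' is what permits the transfer of univalence from $F$ to $f$. Once these two bridges are in place the proof is essentially a clean composition of Theorem \ref{thm4} with Lemma \ref{lem1}, and the sharpness in the unit-bound case follows exactly as in Theorem \ref{thm6}.
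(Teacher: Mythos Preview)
Your proposal is correct and matches the paper's approach exactly: the paper gives no separate proof of Theorem~\ref{thm7} but simply states that it follows by the method of Theorem~\ref{thm6} with Theorem~\ref{thm4} in place of Theorem~\ref{thm3}, together with Lemma~\ref{lem1}, which is precisely the composition you carry out (including the $J_f=|f|^2J_F$ bridge and the sharpness via the extremals $e^{\gamma z},\,e^{\gamma\bar z}$). One cosmetic point: Lemma~\ref{lem1} is stated with the normalization $\lambda_f(0)=1$, which is not literally the hypothesis here, but its proof never uses that condition and the paper applies the lemma in the same way; also your assertion $0<\sigma_2\le 1$ is immediate from $\sigma_2<r_2\lambda_0(M)<1$.
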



\begin{corollary}\label{cor7a} Suppose that $M\geq 1$, and $f(z)=\left(a_0(z)\right) \cdot\left(\bar{z} a_1(z)\right)$ is a $\log$-$2$-analytic function on $\mathbb{D}$, satisfying $f(0)=\left|J_f(0)\right|=1$,  where $a_0(z), a_1(z)$ are $\log$-analytic on $\mathbb{D}$, and $A_k(z):=\log a_k(z)=\sum_{n=0}^{\infty} a_{n, k} z^n$ for $k=0,1$, and all its non-zero coefficients  $a_{n_1,0}, a_{n,1}$ satisfy $\left|\arg \frac{a_{n_1, 0}}{a_{n_2, 1}}\right| \leq \frac{\pi}{2}$ for each $n_1, n_2\in\mathbb{N}$ with $n_1\neq n_2$.  If $|\log f(z)| \leq M$, then $f$ is univalent in the disk $\mathbb{D}_{r_2^{\prime}}$ and $f\left(\mathbb{D}_{r_2^{\prime}}\right)$ contains a schlicht disk  $\mathbb{D}\left(w_2^{\prime}, \mu_2^{\prime}\right)$, where $\lambda_0(M)$ is defined by \eqref{2-7},  $r_2^{\prime}$ and $\sigma_2^{\prime}$ are as in Corollary \ref{cor3} with $w_2^{\prime}=\cosh \sigma_2^{\prime}$ and $\mu_2^{\prime}=\sinh \sigma_2^{\prime}$.
%

In particular, if $|\log f(z)|\leq 1$ for all $z\in\mathbb{D}$ and $f(0)=\left|J_f(0)\right|=1$, then $f$ is univalent in the unit disk $\mathbb{D}$ and $f\left(\mathbb{D}\right)$ contains a schlicht disk $\mathbb{D}\left(w_1', \mu_1'\right)$, where $w_1'=\cosh 1$ and $\mu_1'=\sinh 1$. This result is sharp.
\end{corollary}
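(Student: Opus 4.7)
My plan is to reduce Corollary~\ref{cor7a} to Corollary~\ref{cor3} by passing to the polyanalytic function $F(z) := \log f(z)$, and then to transfer the conclusion back to $f = e^{F}$ via the geometric inclusion supplied by Lemma~\ref{lem1}. This follows the same template as the proof of Theorem~\ref{thm6}, but with Corollary~\ref{cor3} (the $\alpha=2$ specialization of Theorem~\ref{thm4}) playing the role that Theorem~\ref{thm3} plays there.

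First I would verify that $F$ meets every hypothesis of Corollary~\ref{cor3}. Setting $A_k := \log a_k$, one has $F(z) = A_0(z) + \bar{z}A_1(z)$, which is polyanalytic of order $2$ with Taylor coefficients $a_{n,k}$. The coefficient--argument assumption and the bound $|F(z)| = |\log f(z)| \leq M$ are given verbatim. The normalization $f(0)=1$ forces $F(0)=0$. Since $F_z = f_z/f$ and $F_{\bar z} = f_{\bar z}/f$, one computes $J_F = J_f/|f|^2$, so evaluation at $0$ gives $|J_F(0)| = |J_f(0)| = 1$. Corollary~\ref{cor3} then produces univalence of $F$ on $\mathbb{D}_{r_2'}$ together with $F(\mathbb{D}_{r_2'}) \supset \mathbb{D}_{\sigma_2'}$.

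Next I would transfer both conclusions to $f$. Univalence of $f$ on $\mathbb{D}_{r_2'}$ is automatic: if $f(z_1)=f(z_2)$, then, taking the principal branch, $F(z_1)=F(z_2)$, and univalence of $F$ forces $z_1=z_2$. For the covering statement, I would invoke the geometric inclusion proved inside Lemma~\ref{lem1}, namely $\mathbb{D}(\cosh\sigma_2',\sinh\sigma_2') \subset e^{\mathbb{D}_{\sigma_2'}}$, to obtain
\[
\mathbb{D}(w_2',\mu_2') \;=\; \mathbb{D}(\cosh\sigma_2',\sinh\sigma_2') \;\subset\; e^{\mathbb{D}_{\sigma_2'}} \;\subset\; f(\mathbb{D}_{r_2'}),
\]
which is exactly the required schlicht disk.

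For the sharp case $M=1$, direct substitution in the formulas of Corollary~\ref{cor3} gives $r_2' = 1$ and $\sigma_2' = 1$ (using $\lambda_0(1) = 1$), so $f$ is univalent on $\mathbb{D}$ and $f(\mathbb{D})$ contains $\mathbb{D}(\cosh 1,\sinh 1)$. Sharpness is witnessed by the extremal log-$2$-analytic functions $f_1(z) = e^{\gamma z}$ or $f_1(z) = e^{\gamma \bar z}$ with $|\gamma|=1$, exactly as in the sharpness argument of Theorem~\ref{thm6}: both saturate the polyanalytic extremizer for Corollary~\ref{cor3} and hence, through the inclusion above, the covering radius $\sinh 1$. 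I do not expect any serious obstacle; the only computation that is not a direct quotation from earlier results is the identity $J_F = J_f/|f|^2$, which is what lets the hypothesis $|J_f(0)|=1$ be re-read as $|J_F(0)|=1$ so that Corollary~\ref{cor3} applies.
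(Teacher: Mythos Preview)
Your proposal is correct and follows essentially the same route the paper intends: Corollary~\ref{cor7a} is stated without its own proof, but as the $\alpha=2$ instance of Theorem~\ref{thm7}, whose proof is declared to be ``the analogous method as in the proof of Theorem~\ref{thm6}'' with Theorem~\ref{thm4}/Corollary~\ref{cor3} in place of Theorem~\ref{thm3} and Lemma~\ref{lem1} supplying the covering. Your observation that $J_F=J_f/|f|^2$ (hence $|J_F(0)|=1$) and your extraction of the purely geometric inclusion $\mathbb{D}(\cosh\sigma,\sinh\sigma)\subset e^{\mathbb{D}_\sigma}$ from Lemma~\ref{lem1} are exactly the ingredients needed; only note that for $M=1$ the formula for $\sigma_2'$ is an indeterminate form, so one should invoke the ``in particular'' clause of Corollary~\ref{cor3} (i.e.\ Corollary~\ref{cor2}) rather than substitute directly.
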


\begin{corollary}\label{cor8a} Suppose that $M\geq 1$, and $f(z)=\left(a_0(z)\right) \cdot\left(\bar{z} a_1(z)\right) \cdot\left(\bar{z}^2 a_2(z)\right)$ is a $\log$-$3$-analytic function on $\mathbb{D}$, satisfying $f(0)=\left|J_f(0)\right|=1$, where for each $k$, $a_k(z)$ is $\log$-analytic on $\mathbb{D}$, and $A_k(z):=\log a_k(z)=\sum_{n=0}^{\infty} a_{n, k} z^n$, and all its non-zero coefficients $a_{n_1,k_1}, a_{n,k_2}$ satisfy $\left|\arg \frac{a_{n_1, k_1}}{a_{n_2, k_2}}\right| \leq \frac{\pi}{2}$ for each $k_1,k_2\in\{0, 1 , 2\}$ and $n_1, n_2\in\mathbb{N}$ with $k_1\neq k_2,\, n_1\neq n_2$. If $|\log f(z)| \leq M$, then $f$ is univalent in the disk $\mathbb{D}_{r_2^*}$ and $f\left(\mathbb{D}_{r_2^*}\right)$ contains a schlicht disk  $\mathbb{D}\left(w_2^*, \mu_2^*\right)$, where $\lambda_0(M)$ is defined by \eqref{2-7},  $r_2^*$ and  $\sigma_2^*$ are as in Corollary \ref{cor4} with $w_2^*=\cosh \sigma_2^*$ and $\mu_2^*=\sinh \sigma_2^*$.
and $w_2^*=\cosh \sigma_2^*$, $\mu_2^*=\sinh \sigma_2^*$.

In particular, if $|\log f(z)|\leq 1$ for all $z\in\mathbb{D}$ and $f(0)=\left|J_f(0)\right|=1$, then $f$ is univalent in the unit disk $\mathbb{D}$ and $f\left(\mathbb{D}\right)$ contains a schlicht disk $\mathbb{D}\left(w_1', \mu_1'\right)$, where $w_1'=\cosh 1$ and $\mu_1'=\sinh 1$. This result is sharp.
\end{corollary}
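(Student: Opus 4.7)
The plan is to run the argument parallel to the proof of Theorem~\ref{thm6} (which handles the version of this corollary coming from Theorem~\ref{thm3}), but with Corollary~\ref{cor4} playing the role of the polyanalytic engine. Setting $F(z)=\log f(z)$, the hypothesis that $f$ is $\log$-$3$-analytic on $\mathbb{D}$ means $F$ is polyanalytic of order $3$ with expansion $F(z)=A_0(z)+\bar{z}A_1(z)+\bar{z}^2 A_2(z)$; the coefficient-argument condition on the $a_{n_j,k_j}$ transfers verbatim from $f$ to $F$. The strategy is then: first, verify that $F$ satisfies all hypotheses of Corollary~\ref{cor4}; second, apply that corollary to obtain univalence of $F$ on $\mathbb{D}_{r_2^*}$ together with the Euclidean covering $F(\mathbb{D}_{r_2^*})\supset \mathbb{D}_{\sigma_2^*}$; third, use Lemma~\ref{lem1} to pass from this covering of $F$ to the multiplicative covering of $f$.

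The verification in the first step is essentially a normalization check. Since $f(0)=1$, we have $F(0)=\log 1 = 0$, and $|F(z)|=|\log f(z)|\le M$ is immediate. The Jacobian identity is the key point: applying the Wirtinger chain rule to $f=e^F$ gives $f_z = f\cdot F_z$ and $f_{\bar z}= f\cdot F_{\bar z}$, so at $z=0$ where $f(0)=1$ one obtains
\[
J_F(0) = |F_z(0)|^2 - |F_{\bar z}(0)|^2 = |f_z(0)|^2 - |f_{\bar z}(0)|^2 = J_f(0),
\]
whence $|J_F(0)|=|J_f(0)|=1$. Corollary~\ref{cor4} therefore applies and yields both the univalence radius $r_2^*$ for $F$ and the schlicht disk $\mathbb{D}_{\sigma_2^*}\subset F(\mathbb{D}_{r_2^*})$.

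Univalence of $f$ on $\mathbb{D}_{r_2^*}$ then follows trivially from that of $F$: if $f(z_1)=f(z_2)$ then, using the principal branch, $F(z_1)=\log f(z_1)=\log f(z_2)=F(z_2)$, forcing $z_1=z_2$. For the covering disk I invoke Lemma~\ref{lem1} with $\rho=r_2^*$ and $\sigma=\sigma_2^*$; the geometric argument of that lemma needs only the inclusion $F(\mathbb{D}_\rho)\supset \mathbb{D}_\sigma$ together with $F=\log f$, and delivers the schlicht disk $\mathbb{D}(\cosh\sigma_2^*,\sinh\sigma_2^*)=\mathbb{D}(w_2^*,\mu_2^*)$ inside $f(\mathbb{D}_{r_2^*})$.

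The sharp case $M=1$ is the cleanest piece. Corollary~\ref{cor2}(i) applied to $F$ forces $F(z)=\alpha z$ or $F(z)=\beta\bar z$ with $|\alpha|=|\beta|=1$, so $f(z)=e^{\alpha z}$ or $f(z)=e^{\beta\bar z}$. Either extremal $f$ is univalent on the whole unit disk, since $w\mapsto e^w$ is injective on any set of diameter less than $2\pi$, and $F$ maps $\mathbb{D}$ bijectively onto $\mathbb{D}$. Applying Lemma~\ref{lem1} with $\rho=\sigma=1$ gives $\mathbb{D}(\cosh 1,\sinh 1)\subset f(\mathbb{D})$, and the final sharpness clause in Lemma~\ref{lem1} shows that the radius $\sinh 1$ cannot be enlarged. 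The only genuine obstacle, and it is modest, is the Jacobian identity $J_F(0)=J_f(0)$ that bridges the hypothesis $|J_f(0)|=1$ of the corollary to the hypothesis $|J_F(0)|=1$ demanded by Corollary~\ref{cor4}; everything else is a direct translation.
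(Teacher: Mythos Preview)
Your proposal is correct and follows essentially the same route the paper takes: set $F=\log f$, verify that $F$ meets the hypotheses of Corollary~\ref{cor4}, then pass from the covering $F(\mathbb{D}_{r_2^*})\supset\mathbb{D}_{\sigma_2^*}$ to the schlicht disk for $f$ via Lemma~\ref{lem1}, exactly as in the proof of Theorem~\ref{thm6}. Your explicit check that $J_F(0)=J_f(0)$ (via $f_z=fF_z$, $f_{\bar z}=fF_{\bar z}$ and $f(0)=1$) and your remark that the geometric portion of Lemma~\ref{lem1} does not actually use the normalization $\lambda_f(0)=1$ are useful clarifications that the paper leaves implicit when it invokes Lemma~\ref{lem1} under the $|J_f(0)|=1$ hypothesis.
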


\begin{theorem}\label{thm8} Suppose that $M\geq 1$, and $f(z)=\prod_{k=0}^{\alpha-1}\left(a_k(z)\right)^{\bar{z}^k}$ is a $\log$-$\alpha$-analytic function on $\mathbb{D}$, satisfying $f(0)=\lambda_f(0)=1$, where for each $k$ , $a_k(z)$ is $\log$-analytic on $\mathbb{D}$, and $A_k(z):=\log a_k(z)=\sum_{n=0}^{\infty} a_{n, k} z^n$, and all its non-zero coefficients $a_{n_1, k_1}, a_{n_2, k_2}$ satisfy $\left|\arg \frac{a_{n_1, k_1}}{a_{n_2, k_2}}\right| \leq \frac{\pi}{2}$ for each $k_1,k_2\in\{0,\ldots , \alpha-1\}$ and $n_1, n_2\in\mathbb{N}$ with $k_1\neq k_2,\, n_1\neq n_2$. If $|\log f(z)| \leq M$, then $f$ is univalent in the disk $\mathbb{D}_{r_3}$ and $f\left(\mathbb{D}_{r_3}\right)$ contains a schlicht disk $\mathbb{D}\left(w_3, \mu_3\right)$, where $r_3$ and $\sigma_3$ are as in Theorem \ref{thm5} with
$w_3=\cosh \sigma_3$ and $\mu_3=\sinh \sigma_3$.

In particular, if $|\log f(z)|\leq 1$ for all $z\in\mathbb{D}$ and $f(0)=\lambda_f(0)=1$, then $f$ is univalent in $\mathbb{D}$ and $f\left(\mathbb{D}\right)$ contains a schlicht disk $\mathbb{D}\left(w_1', \mu_1'\right)$, where $w_1'=\cosh 1\text{ and } \mu_1'=\sinh 1$. This result is sharp.
\end{theorem}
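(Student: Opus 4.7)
The plan is to reduce the $\log$-$\alpha$-analytic problem to the polyanalytic case by taking logarithms, apply Theorem \ref{thm5} to obtain the univalence radius and the covering radius of $F=\log f$, and then lift the covering statement from $F$ to $f$ using Lemma \ref{lem1}. This mirrors the strategy used for Theorem \ref{thm6}, and the bookkeeping is essentially the same; the only substantive change is that the normalization $F(0)=\lambda_F(0)-1=0$ replaces $F(0)=|J_F(0)|-1=0$, which is precisely the hypothesis under which Theorem \ref{thm5} is designed to be invoked.

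First I would set $F(z)=\log f(z)$. Since $f(z)=\prod_{k=0}^{\alpha-1}(a_k(z))^{\bar z^k}$ with each $a_k$ $\log$-analytic, we have $F(z)=\sum_{k=0}^{\alpha-1}\bar z^k A_k(z)$, so $F$ is polyanalytic of order $\alpha$ with Taylor coefficients $a_{n,k}$. From $f(0)=1$ I get $F(0)=0$, and a direct computation of $F_z=f_z/f$ and $F_{\bar z}=f_{\bar z}/f$ at $z=0$ gives $\lambda_F(0)=\lambda_f(0)=1$. The argument condition \eqref{2-3} on the $a_{n,k}$ is assumed in the statement, and the bound $|F(z)|=|\log f(z)|\le M$ is given, so all hypotheses of Theorem \ref{thm5} are in force. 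Theorem \ref{thm5} therefore yields that $F$ is univalent in $\mathbb{D}_{r_3}$ and $F(\mathbb{D}_{r_3})\supset \mathbb{D}_{\sigma_3}$, with $r_3$ and $\sigma_3$ as defined there.

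Next I would transfer these two conclusions back to $f=e^F$. Univalence is immediate: if $f(z_1)=f(z_2)$ for $z_1,z_2\in\mathbb{D}_{r_3}$, then $F(z_1)-F(z_2)\in 2\pi i\mathbb{Z}$; but $F(\mathbb{D}_{r_3})$ is contained in a disk of controlled radius (in fact in $\mathbb{D}_M$), so for $r_3$ small enough the only possibility is $F(z_1)=F(z_2)$, forcing $z_1=z_2$ by the univalence of $F$. (Alternatively, one can repeat verbatim the chain of inequalities leading to $|F(z_1)-F(z_2)|\ge|z_1-z_2|\psi(r)$ used in Theorem \ref{thm5}.) To upgrade the covering, I apply Lemma \ref{lem1} with $\rho=r_3$ and $\sigma=\sigma_3$, provided $\sigma_3\le 1$; the lemma then produces a schlicht disk $\mathbb{D}(w_3,\mu_3)\subset f(\mathbb{D}_{r_3})$ with $w_3=\cosh\sigma_3$ and $\mu_3=\sinh\sigma_3$, which is the desired conclusion.

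The small technical point to watch is precisely whether $\sigma_3\le 1$, which is needed to apply Lemma \ref{lem1}; this is the main (and only) obstacle beyond unwinding definitions. Inspection of the formula for $\sigma_3$ shows that $\sigma_3\le r_3<1$, so the condition is automatic. Finally, for the sharpness assertion in the case $M=1$, I would invoke Corollary \ref{cor2}(ii): the hypothesis $F(0)=\lambda_F(0)-1=0$ together with $|F(z)|\le 1$ forces $F(z)=\gamma z$ or $F(z)=\gamma\bar z$ with $|\gamma|=1$. Taking $f_1=e^{F_1}$ with one of these two $F_1$, we see $f_1$ satisfies the hypotheses, is univalent on all of $\mathbb{D}$ (so $r_1'=1$ is sharp), and $F_1(\mathbb{D})=\mathbb{D}$ gives $\sigma_1'=1$; the sharpness of $\mu_1'=\sinh 1$ then follows from the sharpness clause of Lemma \ref{lem1}, exactly as in the proof of Theorem \ref{thm6}.
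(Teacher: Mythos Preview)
Your proposal is correct and follows essentially the same route as the paper: set $F=\log f$, verify the hypotheses of Theorem \ref{thm5}, and then invoke Lemma \ref{lem1} to convert the covering disk $\mathbb{D}_{\sigma_3}$ for $F$ into the disk $\mathbb{D}(\cosh\sigma_3,\sinh\sigma_3)$ for $f$; the sharpness argument via the extremal functions $F_1=\gamma z$ or $\gamma\bar z$ is likewise the same. The only place you over-complicate matters is the univalence transfer: since the paper fixes the principal branch of $\log$, the map $F=\log f$ is a genuine function of $f(z)$, so $F(z_1)\neq F(z_2)$ \emph{immediately} gives $f(z_1)\neq f(z_2)$---there is no need to invoke the $2\pi i\mathbb{Z}$ periodicity of $\exp$ or argue that ``$r_3$ is small enough'' (which, as stated, is not justified since $r_3$ is fixed by $M$).
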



\begin{corollary}\label{cor7} Suppose that $M\geq 1$, and $f(z)=\left(a_0(z)\right) \cdot\left(\bar{z} a_1(z)\right)$ is a $\log$-$2$-analytic function on $\mathbb{D}$, satisfying $f(0)=\lambda_f(0)=1$, where $a_0(z), a_1(z)$ are $\log$-analytic on $\mathbb{D}$, and $A_k(z):=\log a_k(z)=\sum_{n=0}^{\infty} a_{n, k} z^n$ for $k=0,1$, and all its non-zero coefficients  $a_{n_1,0}, a_{n,1}$ satisfy $\left|\arg \frac{a_{n_1, 0}}{a_{n_2, 1}}\right| \leq \frac{\pi}{2}$ for each $n_1, n_2\in\mathbb{N}$ with $n_1\neq n_2$. If $|\log f(z)| \leq M$, then $f$ is univalent in the disk $\mathbb{D}_{r_3^{\prime}}$ and $f\left(\mathbb{D}_{r_3^{\prime}}\right)$ contains a schlicht disk  $\mathbb{D}\left(w_3^{\prime}, \mu_3^{\prime}\right)$, where
$$r_3^{\prime}=1-\sqrt{\frac{2\sqrt{M^2-1}}{1+2\sqrt{M^2-1}}},
$$
$w_3^{\prime}=\cosh \sigma_3^{\prime}$, $\mu_3^{\prime}=\sinh \sigma_3^{\prime}$, and
$$\sigma_3^{\prime}=r_3^{\prime}-\sqrt{M^2-1} \frac{2 {r_3^{\prime}}^2}{1-r_3^{\prime}}.$$

In particular, if $|\log f(z)|\leq 1$ for all $z\in\mathbb{D}$ and $f(0)=\left|J_f(0)\right|=1$, then $f$ is univalent in the unit disk $\mathbb{D}$ and $f\left(\mathbb{D}\right)$ contains a schlicht disk $\mathbb{D}\left(w_1', \mu_1'\right)$, where $w_1'=\cosh 1,\, \mu_1'=\sinh 1$. This result is sharp.
\end{corollary}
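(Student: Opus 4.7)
The plan is to reduce Corollary \ref{cor7} to the polyanalytic setting by passing to $F(z) := \log f(z)$, and then to follow the $\alpha = 2$ track established in Corollary \ref{cor3} with the quantity $\lambda_0(M)$ replaced by $1$. Since each $a_k$ is $\log$-analytic, $F$ is polyanalytic of order $2$ with $F(z) = A_0(z) + \bar z A_1(z)$. The conditions $f(0) = 1$, $\lambda_f(0) = 1$, and $|\log f(z)| \leq M$ translate respectively into $F(0) = 0$, $\lambda_F(0) = 1$, and $|F(z)| \leq M$; the first and third are immediate, and the middle follows from $f = e^F$ together with $f(0) = 1$, which force $|f_z(0)| = |F_z(0)|$ and $|f_{\bar z}(0)| = |F_{\bar z}(0)|$. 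The argument hypothesis on the Taylor coefficients transfers verbatim, so $F$ satisfies the assumptions of Theorem \ref{thm1}(3) with $\alpha = 2$, yielding $|a_{n,k}| \leq \sqrt{M^2-1}$ for every $(n,k) \neq (1,0), (0,1)$.

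For the univalence radius I would repeat the line-integral argument used in Corollary \ref{cor3}: for distinct $z_1, z_2 \in \mathbb{D}_r$, integrate $F_z\, dz + F_{\bar z}\, d\bar z$ along the segment from $z_2$ to $z_1$, peel off the linear part contributing $\lambda_F(0)\,|z_1-z_2| = |z_1-z_2|$, and dominate the tails of $A_0^{\prime}$ and of $\bar z A_1^{\prime}(z)\, dz + A_1(z)\, d\bar z$ by the uniform coefficient bound. All tail sums telescope (just as in Corollary \ref{cor3}) into the single rational expression $\sqrt{M^2-1}\cdot 2r(2-r)/(1-r)^2$, producing
\begin{equation*}
|F(z_1) - F(z_2)| \;\geq\; |z_1 - z_2|\left(1 - \sqrt{M^2-1}\,\frac{2r(2-r)}{(1-r)^2}\right).
\end{equation*}
The bracket is strictly decreasing from $1$ to $-\infty$ on $[0,1)$, so has a unique zero. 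Substituting $u = 1 - r$ reduces the equation to $u^2(1 + 2\sqrt{M^2-1}) = 2\sqrt{M^2-1}$, whose positive root is precisely the stated $r_3^{\prime}$. Hence both $F$ and $f = e^F$ are univalent in $\mathbb{D}_{r_3^{\prime}}$.

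For the schlicht-disk claim I would evaluate $|F(w)|$ on $|w| = r_3^{\prime}$: isolate the linear combination $a_{1,0} w + a_{0,1} \bar w$, bound its modulus below by $\lambda_F(0)\,r_3^{\prime} = r_3^{\prime}$ via the reverse triangle inequality, and bound the two remaining tails (coming from $A_0$ and from $\bar w A_1$) by $\sqrt{M^2-1}\cdot 2(r_3^{\prime})^2/(1 - r_3^{\prime})$. Subtracting gives $|F(w)| \geq \sigma_3^{\prime}$, so $F(\mathbb{D}_{r_3^{\prime}}) \supset \mathbb{D}_{\sigma_3^{\prime}}$. Invoking Lemma \ref{lem1} with $\sigma = \sigma_3^{\prime}$ then yields the schlicht disk $\mathbb{D}(w_3^{\prime}, \mu_3^{\prime})$ with $w_3^{\prime} = \cosh \sigma_3^{\prime}$ and $\mu_3^{\prime} = \sinh \sigma_3^{\prime}$. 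For the sharpness statement in the $M = 1$ case, Corollary \ref{cor2}(ii) pins $F$ down to $\gamma z$ or $\gamma \bar z$ with $|\gamma| = 1$, so $f = e^F$ is the extremal function, $r = 1$ is optimal, and the sharpness clause of Lemma \ref{lem1} produces $\mu_1^{\prime} = \sinh 1$.

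The step I anticipate being most delicate is the coefficient bookkeeping that consolidates the two distinct types of tail contributions — from $A_0^{\prime}$ on the $dz$-side and from $\bar z A_1$ on the $d\bar z$-side — into the single clean expression $2r(2-r)/(1-r)^2$; this consolidation is what produces the closed form for $r_3^{\prime}$, and it also explains why the final covering radius $\sigma_3^{\prime}$ takes the same shape as $\sigma_2^{\prime}$ of Corollary \ref{cor3} with $\lambda_0(M)$ set to $1$. Everything else is a direct transcription of the $\alpha = 2$ polyanalytic analysis into the $\log$-analytic framework via Lemma \ref{lem1}.
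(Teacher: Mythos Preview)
Your proposal is correct and follows essentially the same route as the paper: the paper states Corollary \ref{cor7} without a separate proof, indicating only that one applies the method of Theorem \ref{thm6} (pass to $F=\log f$, invoke the corresponding polyanalytic estimate, then Lemma \ref{lem1}) with the $\alpha=2$ analog of Theorem \ref{thm5} in place of Theorem \ref{thm3}. Your reduction to $F$, use of Theorem \ref{thm1}(3), reproduction of the Corollary \ref{cor3} line-integral computation with $\lambda_0(M)$ replaced by $1$, and appeal to Lemma \ref{lem1} (including the sharpness clause for $M=1$ via Corollary \ref{cor2}(ii)) match this exactly.
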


\begin{corollary}\label{cor8} Suppose that $M\geq 1$, and $f(z)=\left(a_0(z)\right) \cdot\left(\bar{z} a_1(z)\right) \cdot\left(\bar{z}^2 a_2(z)\right)$ is a $\log$-$3$-analytic function on $\mathbb{D}$, satisfying $f(0)=\lambda_f(0)=1$, where for each $k$, $a_k(z)$ is $\log$-analytic on $\mathbb{D}$, and $A_k(z):=\log a_k(z)=\sum_{n=0}^{\infty} a_{n, k} z^n$, and all its non-zero coefficients $a_{n_1,k_1}, a_{n,k_2}$ satisfy $\left|\arg \frac{a_{n_1, k_1}}{a_{n_2, k_2}}\right| \leq \frac{\pi}{2}$ for each $k_1,k_2\in\{0, 1 , 2\}$ and $n_1, n_2\in\mathbb{N}$ with $k_1\neq k_2,\, n_1\neq n_2$. If $|\log f(z)| \leq M$, then $f$ is univalent in the disk $\mathbb{D}_{r_3^*}$ and $f\left(\mathbb{D}_{r_3^*}\right)$ contains a schlicht disk  $\mathbb{D}\left(w_3^*, \mu_3^*\right)$, where $$r_3^*=1-\sqrt{\frac{3\sqrt{M^2-1}}{1+3\sqrt{M^2-1}}},$$
$w_3^*=\cosh \sigma_3^*$, $\mu_3^*=\sinh \sigma_3^*$, and
$$\sigma_3^*= r_3^*-\sqrt{M^2-1} \frac{3{r_3^*}^2}{1-r_3^*}.$$

In particular, if $|\log f(z)|\leq 1$ for all $z\in\mathbb{D}$ and $f(0)=\left|J_f(0)\right|=1$, then $f$ is univalent in the unit disk $\mathbb{D}$ and $f\left(\mathbb{D}\right)$ contains a schlicht disk $\mathbb{D}\left(w_1', \mu_1'\right)$, where $w_1'=\cosh 1,\, \mu_1'=\sinh 1$. This result is sharp.
\end{corollary}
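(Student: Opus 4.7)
The plan is to mirror the proofs of Corollaries~\ref{cor4} and~\ref{cor7}: apply Theorem~\ref{thm5} to the polyanalytic function $F(z):=\log f(z)$ with $\alpha=3$, specialize the resulting general formulas to this value of $\alpha$, and then transfer the conclusion from $F$ back to $f$ by means of Lemma~\ref{lem1}.

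First I would set $F(z)=\log f(z)=A_0(z)+\bar z A_1(z)+\bar z^2 A_2(z)$, which is a polyanalytic function of order $3$ on $\mathbb{D}$. The normalizations $f(0)=\lambda_f(0)=1$ translate directly into $F(0)=\lambda_F(0)-1=0$ (since $F_z(0)=f_z(0)/f(0)=f_z(0)$ and the analogous identity holds for $F_{\bar z}$), the bound $|\log f(z)|\le M$ becomes $|F(z)|\le M$, and the coefficient-argument hypothesis on the $a_{n,k}$ is exactly \eqref{2-3}. Hence $F$ meets every assumption of Theorem~\ref{thm5} with $\alpha=3$.

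Next I would specialize Theorem~\ref{thm5} to $\alpha=3$. A short computation of the bracket in \eqref{psi_r} at $\alpha=3$ gives $r(2-r)(1-r)+1+r+r^2(r-3)=(1-r)(1+4r-2r^2)$; after dividing by $(1-r)^3$ and simplifying, the equation collapses to
\begin{equation*}
1-\sqrt{M^2-1}\,\frac{3r(2-r)}{(1-r)^2}=0,
\end{equation*}
whose least positive root is precisely $r_3^*=1-\sqrt{3\sqrt{M^2-1}/(1+3\sqrt{M^2-1})}$. A parallel collapse of the sum $2r^2(1-r^{\alpha-2})/(1-r)+r^\alpha+r^2(1-r^\alpha)/(1-r)^2$ at $\alpha=3$ reduces it to $3r^2/(1-r)$, yielding the announced $\sigma_3^*=r_3^*-\sqrt{M^2-1}\cdot 3(r_3^*)^2/(1-r_3^*)$. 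Thus Theorem~\ref{thm5} delivers univalence of $F$ on $\mathbb{D}_{r_3^*}$ together with $F(\mathbb{D}_{r_3^*})\supset\mathbb{D}_{\sigma_3^*}$, and in particular univalence of $f=e^{F}$ on the same disk.

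The final step is to invoke Lemma~\ref{lem1} with $\rho=r_3^*$ and $\sigma=\sigma_3^*$: it produces the schlicht disk $\mathbb{D}(w_3^*,\mu_3^*)=\mathbb{D}(\cosh\sigma_3^*,\sinh\sigma_3^*)$ inside $f(\mathbb{D}_{r_3^*})$. For the sharpness at $M=1$, the extremal function $f_1(z)=e^{\gamma z}$ (or $e^{\gamma\bar z}$) with $|\gamma|=1$ satisfies every hypothesis of the corollary; it is univalent on all of $\mathbb{D}$, so $r_3^*=1$ is sharp, and the sharpness clause of Lemma~\ref{lem1} then forces the sharpness of $\mu_1'=\sinh 1$ centered at $w_1'=\cosh 1$. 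The only mildly nontrivial ingredient is the two algebraic specializations to $\alpha=3$ above; everything else is structural and already isolated in the cited theorem and lemma.
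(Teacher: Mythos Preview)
Your proposal is correct and follows essentially the same route as the paper: the paper presents Corollary~\ref{cor8} as the $\alpha=3$ specialization of Theorem~\ref{thm8}, which in turn is obtained (without written proof) by combining Theorem~\ref{thm5} with Lemma~\ref{lem1} exactly as in the proof of Theorem~\ref{thm6}. Your algebraic reductions of \eqref{psi_r} and of the $\sigma_3$-expression at $\alpha=3$ are accurate, and your treatment of the $M=1$ sharpness via $f_1(z)=e^{\gamma z}$ matches the paper's argument in Theorem~\ref{thm6}.
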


\subsection*{Acknowledgments}
The work of the first two authors was supported by Natural Science Foundation of Guangdong Province (Grant No. 2021A1515010058).
The authors thank the referee  for his/her valuable comments and suggestions to this paper.

\subsection*{Conflict of Interests}
The authors declare that they have no conflict of interest, regarding the publication of this paper.
\subsection*{Data Availability Statement}
The authors declare that this research is purely theoretical and does not associate with any data.

\end{document}